\definecolor{lightgray}{rgb}{0.8, 0.8, 0.8}
\definecolor{darkgray}{rgb}{0.7, 0.7, 0.7}
\definecolor{darkblue}{rgb}{0, 0, .4}
\newtheorem{theorem}{Theorem}[section]
\newtheorem{proposition}[theorem]{Proposition}
\newtheorem{lemma}[theorem]{Lemma}
\newtheorem{example}[theorem]{Example}
\newtheorem*{claim}{Claim}
\newcounter{todocounter}
\newfont{\footsc}{cmcsc10 at 8truept}
\newfont{\footbf}{cmbx10 at 8truept}
\newfont{\footrm}{cmr10 at 10truept}
\renewenvironment{abstract}%
                {
                  \begin{list}{}%
                     {\setlength{\rightmargin}{1in}%
                      \setlength{\leftmargin}{1in}}%
                   \item[]\ignorespaces\begin{small}}%
                 {\end{small}\unskip\end{list}}
\keywords{05C20, 06A06, 05C25, 05A05 (primary), 03C13 (secondary)}
\title{\sc{Simple Extensions of Combinatorial Structures}}
\author{%
Robert Brignall\footnote{Supported by the Heilbronn Institute for Mathematical Research}\\[-0.25ex]
\small Department of Mathematics\\[-0.5ex]
\small University of Bristol\\[-0.5ex]
\small Bristol,  BS8 1TW\\[-0.5ex]
\small England\\[1.5ex]
Nik Ru\v{s}kuc\\[-0.25ex]
\small Department of Mathematics and Statistics\\[-0.5ex]
\small University of St Andrews\\[-0.5ex]
\small St Andrews, KY16 9SS\\[-0.5ex]
\small Scotland\\[1.5ex]
Vincent Vatter\footnote{This research was conducted while V. Vatter was a member of the School of Mathematics and Statistics at the University of St Andrews, supported by EPSRC grant GR/S53503/01.}\\[-0.25ex]
\small Department of Mathematics\\[-0.5ex]
\small Dartmouth College\\[-0.5ex]
\small Hanover, New Hampshire 03755\\[-0.5ex]
\small U.S.A.\\[-10pt]
}
\date{}
\begin{document}
\maketitle

\newcommand{\Sub}{\operatorname{Sub}}
\newcommand{\Av}{\operatorname{Av}}
\newcommand{\rect}{\mathrm{rect}}
\newcommand{\A}{\mathcal{A}}
\newcommand{\B}{\mathcal{B}}
\newcommand{\C}{\mathcal{C}}
\newcommand{\D}{\mathcal{D}}
\renewcommand{\L}{\mathcal{L}}
\renewcommand{\P}{\mathcal{P}}
\newcommand{\Q}{\mathcal{Q}}
\newcommand{\R}{\mathcal{R}}
\renewcommand{\S}{\mathcal{S}}
\newcommand{\W}{\mathcal{W}}
\newcommand{\Si}{\operatorname{Si}}
\newcommand{\dom}{\operatorname{dom}}

\newcommand{\In}{\operatorname{In}}
\newcommand{\Out}{\operatorname{Out}}
\newcommand{\Core}{\operatorname{Core}}
\newcommand{\Int}{\operatorname{Int}}
\newcommand{\Ext}{\operatorname{Ext}}
\newcommand{\updown}{{\uparrow\hspace{-2pt}\downarrow}}
\newcommand{\downup}{{\downarrow\hspace{-2pt}\uparrow}}

\newcommand{\vspc}{\vspace{2mm}}

\newcommand{\OEIS}[1]{(sequence #1 of~\cite{sloane:the-on-line-enc:})}

\pagestyle{main}

\begin{abstract}
An interval in a combinatorial structure $R$ is a set $I$ of points which are related to every point in $R\setminus I$ in the same way.
A structure is simple if it has no proper intervals.
Every combinatorial structure can be expressed as an inflation of a simple structure by structures of smaller sizes --- this is called the substitution (or modular) decomposition.
In this paper we prove several results of the following type: An arbitrary structure $S$ of size $n$ belonging to a class $\mathcal{C}$ can be embedded into a simple structure from $\mathcal{C}$ by adding at most
$f(n)$ elements.
We prove such results when $\mathcal{C}$ is the class of all tournaments, graphs, permutations, posets, digraphs, oriented graphs and general relational structures containing a relation of arity greater than 2.  The function $f(n)$ in
these cases is $2$, $\lceil \log_2(n+1)\rceil$, $\lceil (n+1)/2\rceil$, $\lceil (n+1)/2\rceil$, $\lceil \log_4(n+1)\rceil$, $\lceil \log_3(n+1)\rceil$ and $1$, respectively.  In each case these bounds are best possible.
\end{abstract}

\section{Introduction}

Relational structures -- objects governed by a given set of relations over some ground set -- provide a general setting in which to study a wide variety of combinatorial structures including graphs, tournaments, posets and permutations. For each of these types of structure, the \emph{substitution decomposition} (also called the \emph{modular decomposition}) can be used to describe larger objects in terms of smaller ones. The elemental building blocks arising in this decomposition are the \emph{simple}\footnote{The terms indecomposable, prime, and primitive have been used in place of simple in other contexts.} structures.

In this paper, rather than decompose a combinatorial structure to describe it in terms of smaller simple ones, we will be embedding structures into larger simple ones. In particular, we consider the following question: given a class $\mathcal{C}$ of relational structures, what is the minimal function $f(n)$ so that any structure in $\mathcal{C}$ of size $n$ can be embedded in a simple structure from $\mathcal{C}$ containing at most $f(n)$ additional elements?

This question has its origins in the early 1970s with Erd\H{o}s {\itshape et al.}~\cite{erdos:some-remarks-on:}, who showed in the case of tournaments that $f(n)=2$ for all $n$. In a subsequent paper~\cite{erdos:simple-one-poin:} they also showed that the only tournaments which could not be embedded in a simple tournament with $1$ additional vertex were transitive tournaments with an odd number of vertices. The case of complete graphs was also answered around the same time in Sumner's thesis~\cite{sumner:indecomposable-:}, who showed that $f(n)=\lceil\log_2(n+1)\rceil$. In this paper, we answer the question completely for many standard combinatorial structures, in addition to more general relational structures containing relations of higher arity. These results rest on a unifying framework that uses the substitution decomposition, but each class of structures presents different problems and this is reflected in the variety of answers attained.

For the rest of this section, we will introduce important definitions and present the standard framework for our approach to the problem. In Section~\ref{sec-tournaments} we show how the case of tournaments fits into the framework, then in Sections~\ref{sec-graphs}---\ref{sec-digraphs} we demonstrate the cases of graphs, permutations, posets, digraphs and oriented graphs. Section~\ref{sec-higher-arity} discusses more general results for higher arity relational structures, and some concluding remarks are given in Section~\ref{sec-conclusion}.

A $k$-\emph{ary relation} $R$ on a set $A$ is a subset of $A^k$. Informally, a \emph{relational structure} is an ordered sequence of relations over some set $A$.
More specifically, we may define a \emph{relational language},
$\mathcal{L}$, to be a set of \emph{relational symbols}, and for each symbol $R$ there is a positive integer $n_R$ denoting the arity of $R$. A relational structure
$\mathcal{A}$ whose relational symbols are those of $\mathcal{L}$ is then
defined by its \emph{ground set} $A=\dom(\mathcal{A})$ and a set of subsets
$R^\mathcal{A}\subseteq A^{n_R}$ for each $R\in
\mathcal{L}$. Such a structure may also be called an
$\mathcal{L}$-\emph{structure}. When it does not cause any confusion, we will omit the superscript $\mathcal{A}$ from relations.

Most standard combinatorial structures can be viewed as relational structures;
they differ in the number of relations involved, their arities, and extra properties they are required to posses. This viewpoint offers a unifying framework for investigating different combinatorial structures `in parallel'.

\begin{example}
A graph $G$ can be viewed as a relational structure
with a single binary operation which is required to be symmetric and irreflexive.
Thus, in this case the language is  $\mathcal{L} = \{E, n_E=2\}$,
$\dom(\mathcal{A}) = V(G)$, and $(x,y) \in E$ if and only if $x\sim
y$ in $G$.
\end{example}

\begin{example}
Here are some further combinatorial structures viewed as relational structures:
\begin{itemize}
\item[(i)]
Oriented graphs: $\mathcal{L}=\{\rightarrow,n_\rightarrow=2\}$ where $\rightarrow$ is asymmetric,
i.e. $(x\rightarrow y) \Rightarrow (y\not\rightarrow x)$.
\item[(ii)]
Tournaments: $\mathcal{L}=\{\rightarrow,n_\rightarrow=2\}$, but where $\rightarrow$ is trichotomous:
for any $x,y$, precisely one of $x=y$, $x\rightarrow y$ or $y\rightarrow x$ is true.
\item[(iii)]
Digraphs: $\mathcal{L}=\{\rightarrow,n_\rightarrow=2\}$, a `generic' structure with a single binary relation.
\item[(iv)]
Posets: $\mathcal{L}=\{<,n_<=2\}$, where $<$ is asymmetric and transitive.
\item[(v)]
Linear orders:  $\mathcal{L}=\{<,n_<=2\}$, where $<$ is asymmetric, transitive and trichotomous.
\item[(vi)]
Permutations: $\mathcal{L} = \{<,\prec,n_<=2,n_\prec=2\}$, where each of $<$ and $\prec$ are linear orders.
For a permutation $\pi$ of $[n]$ these two orders are defined as follows:
$<$ is the normal ordering on $[n]$, while $i\prec j$ if and only if $\pi(i) < \pi(j)$.
\end{itemize}
\end{example}

A set $I\subseteq A=\dom(\mathcal{A})$ is an \emph{interval} if
for every $R\in\mathcal{L}$, all $x,y\in I$, and all ($n_R-1$)-tuples
$(x_1,\ldots,x_{i-1},x_{i+1},\ldots,x_{n_R})\in A^{n_R-1}\setminus I^{n_R-1}$ we have
\begin{displaymath}
(x_1,\ldots,x_{i-1},x,x_{i+1},\ldots,x_{n_R})\in R^\mathcal{A} \quad \Longleftrightarrow \quad
(x_1,\ldots,x_{i-1},y,x_{i+1},\ldots,x_{n_R})\in R^\mathcal{A}.
\end{displaymath}
Intuitively, in relationships involving elements outside an interval $I$, any two elements of $I$ are interchangeable.
Every singleton set $\{x\}\subseteq A$ is an interval, as is all of $A$ and the empty set.  All other interval are said to be \emph{proper}, and a structure is \emph{simple} if it has no proper intervals.

Let $\mathcal{A}$ and $\mathcal{B}$ be two structures over the same language $\mathcal{L}$.
We say that $\mathcal{A}$ is a \emph{substructure} of $\mathcal{B}$, or, equivalently, that
$\mathcal{B}$ is an \emph{extension} of $\mathcal{A}$, if $A\subseteq B$ and
$R^{\mathcal{A}}=R^{\mathcal{B}}\!\!\restriction_{A}$ for every $R\in\mathcal{L}$.
If, in addition, $\mathcal{B}$ is simple, we say that $\mathcal{B}$ is a \emph{simple extension} of $\mathcal{A}$.   Our aim is therefore to show that in a class of $\mathcal{L}$-structures $\mathcal{C}$, for every $\mathcal{A}\in\mathcal{C}$ there exists
a simple extension $\mathcal{B}\in\mathcal{C}$ of $\mathcal{A}$, and more importantly, to bound $|B\setminus A|$ as a function of $|A|$.

To establish embeddings into simple $\mathcal{L}$-structures we typically need to
look at the intervals of $\mathcal{A}$.
This is easier to establish by first defining the reverse
process. Given an $\mathcal{L}$-structure $\mathcal{S}$, we say that $\mathcal{A}$ is an \emph{inflation} of
$\mathcal{S}$ by the $\mathcal{L}$-structures
$\{\mathcal{A}_s : s\in S\}$, denoted
$\mathcal{A}=\mathcal{S}[\mathcal{A}_s:s\in S]$, if
$\mathcal{A}$ is obtained by replacing each element
$s\in S$ with a set of elements $A_s=\dom(\mathcal{A}_s)$
(assuming beforehand, of course, that all these domains are disjoint) that form an
interval in the $\mathcal{A}$.
More precisely, $A=\dom{\mathcal{A}}=\bigcup_{s\in S} A_s$, and for every $R\in\mathcal{L}$, and every $n_R$-tuple $(a_1,\ldots,a_{n_R})\in A$ where $a_i\in A_{s_i}$, we have
\begin{displaymath}
R^{\mathcal{A}}(a_1,\ldots,a_{n_R})\Leftrightarrow \left\{
\begin{array}{ll}
R^{\mathcal{A}_s}(a_1,\ldots,a_{n_R}) & \mbox{if } s_1=\ldots=s_{n_R}=s; \mbox{ or}\\
R^{\mathcal{S}}(s_1,\ldots, s_{n_R}) & \mbox{otherwise.}
\end{array} \right.
\end{displaymath}
If $\mathcal{A}$ is an inflation of $\mathcal{S}$, we also say that $\mathcal{S}$ is a
\emph{quotient} of $\mathcal{A}$.
We are particularly interested in the case where $\mathcal{S}$ is
simple --- Theorem \ref{substdecompthm} below gives the uniqueness of such an $\mathcal{S}$,
which will be called the \emph{simple quotient} of $\mathcal{A}$. This decomposition is the \emph{substitution} (or \emph{modular}) \emph{decomposition}.

The notion of modular decomposition dates back at least to a 1953 talk
of Fra\"{\i}ss\'e.  However, only the abstract of this
talk~\cite{fraisse:on-a-decomposit:} survives.  The first article
using modular decompositions seems to be
Gallai~\cite{gallai:transitiv-orien:} (for an English translation,
see~\cite{gallai:a-translation-o:}), who applied them to the study of
transitive orientations of graphs. Since then it has reappeared in settings ranging from game theory to combinatorial optimisation
-- see M\"ohring~\cite{mohring:an-algebraic-de:} or M\"ohring and Radermacher~\cite{mohring:substitution-de:} for extensive references.
Before stating and proving the general uniqueness theorem, we quote two basic facts about intervals, which follow from elementary considerations.  The first of these facts is proved explicitly by F\"{o}ldes \cite{foldes:on-intervals-in:}.

\begin{proposition}\label{prop-foldes}
For any two intervals $I$ and $J$ of the $\mathcal{L}$-structure $\mathcal{A}$:
\begin{enumerate}
\item[\textup{(a)}] If $I\cap J \neq \emptyset$ then $I\cap J$ and $I\cup J$ are intervals of $\mathcal{A}$.
\item[\textup{(b)}] $I\setminus J$ is an interval of $\mathcal{A}$.
\end{enumerate}
\end{proposition}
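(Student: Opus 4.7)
The plan is to verify the interval definition directly for each of $I\cap J$, $I\cup J$, and $I\setminus J$, in each case by a small case analysis on the location of the entries of a ``test'' tuple. Throughout, fix a relational symbol $R\in\mathcal{L}$, let $K$ be the candidate interval, fix $x,y\in K$, and fix an $(n_R-1)$-tuple $\mathbf{t}$ that (by the definition of interval) is required to have at least one entry outside $K$; the goal is always to show that putting $x$ or $y$ at the missing coordinate produces tuples with the same $R^{\mathcal{A}}$-membership.

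For $K=I\cap J$ the key observation is that $\mathbf{t}$ must miss $I$ or miss $J$. If some entry of $\mathbf{t}$ lies outside $I$, then $I$'s interval property (with $x,y\in I\cap J\subseteq I$) does the swap directly. Otherwise every entry of $\mathbf{t}$ lies in $I$ but some entry lies outside $J$, in which case $J$'s interval property (with $x,y\in J$) finishes the job.

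For $K=I\cup J$, pick a bridging element $w\in I\cap J$, available from the hypothesis of (a). By definition, some entry of $\mathbf{t}$ lies outside both $I$ and $J$, which supplies the escape needed in both interval properties simultaneously. I then reduce each swap $x\to y$ to a two-step chain $x\to w\to y$, carrying out each sub-step within whichever of $I$ or $J$ contains both of its endpoints.

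For $K=I\setminus J$ the same initial split handles the situation when some entry of $\mathbf{t}$ lies outside $I$. In the remaining case every entry lies in $I$ and some entry $t_k$ lies in $J$. My plan is a three-step bridge via an auxiliary element $z\in J\setminus I$: first use $J$'s interval property to swap $t_k$ for $z$ (legal because $x\notin J$ keeps the tuple from being entirely in $J$); next use $I$'s interval property to swap $x$ for $y$ (legal because the new entry $z\notin I$ takes the tuple out of $I$); finally reverse the first step to restore $t_k$. The main obstacle here is that this bridge tacitly requires $J\setminus I\neq\emptyset$, so statement (b) should be read under the standard ``overlap'' hypothesis $I\not\subseteq J$ and $J\not\subseteq I$; granted that, the three biconditionals compose to give the desired interval property, and the proof reduces to the bookkeeping sketched above.
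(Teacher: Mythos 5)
The paper gives no proof of this proposition --- it cites F\"oldes and describes both parts as ``elementary considerations'' --- so there is nothing to compare against; your direct verification is exactly the intended elementary argument, and it is correct. Part (a) is clean in both halves: the intersection case splits correctly on which of $I$, $J$ the tuple escapes, and the union case correctly exploits the fact that an entry outside $I\cup J$ is simultaneously outside $I$ and outside $J$, so the two-step bridge $x\to w\to y$ through $w\in I\cap J$ composes. For part (b), the three-step bridge also checks out: swapping $t_k$ for $z\in J\setminus I$ is legitimate because $x\in I\setminus J$ keeps the ambient tuple out of $J^{n_R-1}$, the resulting tuple contains $z\notin I$ so the $x\to y$ swap is covered by $I$, and the reverse swap is again covered by $J$ since $y\notin J$.

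Your flag about the hypothesis for (b) is not a defect of your proof but a genuine imprecision in the statement: as literally written, (b) is false. Take the graph on $\{1,2,3,4\}$ with the single edge $12$; then $I=\{1,2,3\}$ and $J=\{2\}$ are both intervals, yet $I\setminus J=\{1,3\}$ is not, since $1\sim 2$ while $3\not\sim 2$. The hypothesis actually needed is only $J\setminus I\neq\emptyset$ (if instead $J\subseteq I$ fails because $I\subseteq J$, then $I\setminus J=\emptyset$, which the paper counts as an interval, so your stronger ``proper overlap'' assumption $I\not\subseteq J$ is not required). In the paper's one application of (b), inside the proof of Theorem~\ref{substdecompthm}, $I$ and $J$ are distinct maximal proper intervals with $I\cup J=A$, so neither contains the other and the needed hypothesis holds; your proof therefore covers everything the paper uses.
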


\begin{theorem}\label{substdecompthm}Let $\mathcal{A}$ be an $\mathcal{L}$-structure for some language $\mathcal{L}$. There exists a unique simple $\mathcal{L}$-structure $\mathcal{S}$ such that $\mathcal{A}=\mathcal{S}[\mathcal{A}_s:s\in S]$. Moreover, when $|S| > 2$,
the structures $\mathcal{A}_s$ are also uniquely determined.
\end{theorem}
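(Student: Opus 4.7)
The proof splits naturally into existence and uniqueness, both driven by Proposition~\ref{prop-foldes}.

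For existence I would induct on $|A|$. If $\mathcal{A}$ is simple (in particular whenever $|A|\leq 2$), take $\mathcal{S}=\mathcal{A}$ with singleton parts. Otherwise, distinguish two cases. First, if $A$ admits a partition into two proper intervals $I$ and $A\setminus I$, take $\mathcal{S}$ to be the induced two-element quotient, which is automatically simple. Second, if no such complementary pair exists, I would show that the maximal proper intervals of $\mathcal{A}$ form a partition of $A$: they cover $A$ because each singleton is itself a proper interval lying in some maximal proper interval, and Proposition~\ref{prop-foldes}(a) forces two overlapping maximal proper intervals $M_1,M_2$ to satisfy $M_1\cup M_2=A$, whereupon Proposition~\ref{prop-foldes}(b) gives that $M_1\setminus M_2$ and $M_2$ are complementary proper intervals, contradicting the case assumption. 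The quotient $\mathcal{S}$ by this partition is then simple, because any proper interval $T\subsetneq S$ with $|T|\geq 2$ would lift via $\bigcup_{s\in T}A_s$ to a proper interval of $\mathcal{A}$ strictly containing each constituent $A_s$, contradicting maximality.

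For uniqueness when $|S|>2$, I would show that in any decomposition $\mathcal{A}=\mathcal{S}[\mathcal{A}_s:s\in S]$ the parts $A_s$ must be the maximal proper intervals of $\mathcal{A}$. Fix $s$ and a proper interval $J$ of $\mathcal{A}$ with $A_s\subseteq J$, and set $T=\{t\in S:A_t\subseteq J\}$. A routine unpacking of the inflation definition shows $T$ is an interval of $\mathcal{S}$, and $T\neq S$ since $J$ is proper. If $|T|\geq 2$ this contradicts simplicity of $\mathcal{S}$, so $T=\{s\}$. If moreover $J\supsetneq A_s$, pick $x\in J\setminus A_s$ in some $A_t$ with $t\neq s$, and a third element $u\in S\setminus\{s,t\}$ (using $|S|>2$) together with some $a_u\in A_u\setminus J$; applying the interval property of $J$ to tuples containing $a_u$ and either $a_s\in A_s$ or $x\in A_t$ then forces $\{s,t\}$ to be a proper interval of $\mathcal{S}$, again a contradiction. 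Hence $J=A_s$, so the $A_s$ are the maximal proper intervals, are canonically determined, and $\mathcal{S}$ is then the induced quotient. When $|S|\leq 2$ uniqueness of $\mathcal{S}$ is immediate: for $|S|=1$ we have $\mathcal{S}\cong\mathcal{A}$, and for $|S|=2$ the two-point quotient is determined by the between-parts tuples of $\mathcal{A}$, which are independent of the chosen two-interval partition.

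The main obstacle is the sub-case $T=\{s\}$ of the uniqueness argument, where one cannot lift $T$ itself to produce a contradiction and must instead construct a \emph{different} proper interval $\{s,t\}$ of $\mathcal{S}$ using a third element $u$. This is exactly the step that requires $|S|>2$, and it explains why the parts $\mathcal{A}_s$ need not be unique when $|S|=2$.
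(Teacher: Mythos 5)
Your proof is correct and follows essentially the same route as the paper's: both rest on Proposition~\ref{prop-foldes}, split into the degenerate case (two complementary intervals partitioning $A$) and the non-degenerate case (the maximal intervals other than $A$ partition the ground set and induce a simple quotient), and obtain uniqueness for $|S|>2$ by showing the blocks of any such decomposition must be exactly those maximal intervals. The one place you are noticeably lighter than the paper is uniqueness of the two-element quotient in the degenerate case, which you declare ``immediate''; the paper actually argues that any competing decomposition $\mathcal{T}[\mathcal{A}'_t:t\in T]$ is forced to have $|T|=2$ (via the interval $U$ of $\mathcal{T}$) and that $\mathcal{T}\cong\mathcal{S}$, and that short argument is genuinely needed to rule out a degenerate and a non-degenerate decomposition coexisting.
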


\begin{proof}
Let $M$ denote the set of all intervals, except $A=\dom(\A)$, which are not contained in another proper interval.

If two intervals $I,J\in M$ intersect, then Proposition~\ref{prop-foldes} (a) shows that $I\cup J$ is also an interval, which, unless $I\cup J=A$, contradicts the definition of $M$.
If $I\cup J=A$, then Proposition~\ref{prop-foldes} (b) shows that $J\setminus I$ is an interval, so $\mathcal{A}$ can be written as the inflation
$\mathcal{S}[\mathcal{A}_1, \mathcal{A}_2]$
of a two-element structure $\mathcal{S}$ (with $S=\{1,2\}$).
Note that every two-element structure is obviously simple.

For uniqueness in this case, suppose that
$\mathcal{A}=\mathcal{T}[\mathcal{A}_t^\prime : t\in T]$
is another modular decomposition of $\mathcal{A}$. Let $t_1,t_2\in T$ be such that
$A_{t_1}^\prime\cap A_1\neq \emptyset$, $A_{t_2}^\prime\cap A_2\neq \emptyset$.
Suppose first that $|T|>2$, and let $t\in T\setminus\{t_1,t_2\}$.
Clearly, at least one of the sets $A_t^\prime\cap A_1$ or $A_t^\prime\cap A_2$ is non-empty;
without loss of generality we assume that
$A_t^\prime\cap A_1\neq \emptyset$. Let $U=\{ u\in T : A_u^\prime \cap A_1\neq\emptyset\}\setminus\{t_2\}$. Then it is easy to see that $U$ is an interval in $\mathcal{T}$. But $t,t_1\in U$, contradicting the simplicity of $\mathcal{T}$. Thus $T=\{t_1,t_2\}$, and it is easy to check that $\mathcal{T}\cong\mathcal{S}$ under the isomorphism $1\mapsto t_1$, $2\mapsto t_2$.

Now consider the case where no two intervals in $M$ intersect.
Clearly, now the sets in $M$ partition $A$.
For each $I\in M$ choose a representative $x_I\in I$, and define the $\mathcal{L}$-structure $\mathcal{S}$ to be the restriction
of the structure $\mathcal{A}$ to the set $S=\dom(\mathcal{S})=\{x_I: I\in M\}$.
Clearly $\mathcal{A}$ is the inflation of $\mathcal{S}$ by the corresponding intervals of $\mathcal{A}$.
The simplicity of $\mathcal{S}$ follows from the observation that if $\mathcal{S}$ contained a proper interval $K$, then $\bigcup \{I : x_I\in K\}$ would be a proper interval of $\mathcal{A}$ contradicting the definition of $M$.

To show uniqueness of $\mathcal{S}$ and all $\mathcal{A}_s$ in this case, suppose that
we have another modular decomposition
$\mathcal{A}=\mathcal{T}[\mathcal{A}_t^\prime : t\in T]$.
Since the intervals in $M$ are maximal and disjoint, each $A_t^\prime$ is contained in some member of $M$.
Suppose $I\in M$ contains several such intervals, $I=A_{t_1}^\prime\cup\ldots\cup A_{t_j}^\prime$.
It is then easy to verify that $\{ t_1,\ldots, t_j\}$ is an interval in $\mathcal{T}$,
a contradiction.
\end{proof}

The two cases in the above proof will be referred to as the \emph{degenerate} and \emph{non-degenerate} substitution decompositions. The degenerate case can give rise to non-uniqueness of this decomposition, and this may be dealt with in several ways, either by specifying the properties that one of the two substructures must possess (for example, the substructure must not itself be decomposable in the same way), or by decomposing the structure into as many pieces as possible (for example, dividing a disconnected graph into its connected components).

We use the former treatment here, but note that the latter is often better suited for certain problems, particularly in relation to the \emph{substitution decomposition tree} of a structure $\A$: each node of this tree corresponds to a substructure of $\A$ whose ground set is an interval, with the root of the tree being $\A$ and the leaves being the singleton ground sets. For a given node with corresponding non-singleton structure $\A'$, the children of $\A'$ are the substructures $\A'_s$ in the decomposition $\A'=\mathcal{S}'[\A'_s:s\in\dom(\mathcal{S})]$. The complexity of computing these trees for particular relational structures has received considerable attention in recent years, particularly in the case of graphs because of its applications in optimisation. See, for example, \cite{bergeron:computing-commo:,cournier:a-new-linear-al:,dahlhaus:efficient-and:,mcconnell:linear-time-digraphs:,mcconnell:modular-decomposition:}.

The proofs of our main theorems below all follow the same basic pattern: we take an arbitrary structure $\mathcal{A}$ of the type under consideration, identify its substitution decomposition, and then employ an inductive argument to show how adding extra points can be used to break the intervals of $\A$. The degenerate case usually needs to be considered separately, and it is precisely this case that normally gives our upper bounds.

\section{Tournaments}\label{sec-tournaments}
Recall that a {\it tournament} is an oriented complete graph.
An interval in a tournament (often called a {\it convex set} in the literature) is a set $I$ of vertices such that for every pair $x,y\in I$ and any $z\in V(T)\setminus I$, either $x\rightarrow z$ and $y\rightarrow z$ or $z\rightarrow x$ and $z\rightarrow y$.

The name ``tournament'' derives from its use to describe a competition where every pair of players $x,y$ must meet each other in a match, the outcome being either that $x$ wins, $y\rightarrow x$, or $x$ loses, $x\rightarrow y$. This viewpoint then defines an algebra with two idempotent binary operations $A_T=\langle T,\vee,\wedge\rangle$, so that if $x\rightarrow y$, then $x\vee y = y\vee x = x$ and $x\wedge y = y\wedge x = y$.
An extensive survey of tournaments from this algebraic viewpoint has been written by Crvenkovi{\'c}, Dolinka and Markovi{\'c} \cite{crvenkovic:a-survey-of-alg:}.
Of interest for us here is that a tournament is simple if and only if its corresponding abstract algebra is also simple,
(i.e. it has no proper congruences or, equivalently, homomorphic images).
Simple extensions of tournaments were studied in a string of papers in the early 1970s, and in particular it is known that
at most two additional vertices are needed:

\begin{theorem}[Erd{\H{o}}s, Fried, Hajnal and Milner~{\cite{erdos:some-remarks-on:}}]\label{tournament}
Every tournament on $n$ vertices has a simple extension with at most $2$ additional vertices.
\end{theorem}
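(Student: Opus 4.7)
My plan is to argue by induction on $n=|V(T)|$, using the substitution decomposition as the main structural tool, as the introduction suggests. The base cases $n\leq 2$ are trivial, since such tournaments are already simple.

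For the inductive step, I would invoke Theorem~\ref{substdecompthm} to write $T=\mathcal{S}[T_s:s\in S]$ and split into the three possible cases for the simple quotient $\mathcal{S}$. If $|S|=1$ then $T$ is simple and there is nothing to add. If $\mathcal{S}$ is simple with $|S|\geq 3$ (the non-degenerate case), then $\mathcal{S}$ already provides a rigid ``skeleton'' of pairwise-distinguishable blocks; I would plan to adjoin two vertices $u,v$ with orientations chosen so that (i) within each non-singleton block $T_s$ the pair $u,v$ jointly distinguishes enough vertices to break every interval of $T_s$ --- this is where the inductive hypothesis on each $T_s$ enters, since each has fewer than $n$ vertices; (ii) no union of blocks is an interval in the extended quotient; and (iii) neither $\{u\}$, $\{v\}$, $\{u,v\}$, nor any set obtained by adjoining one of them to an existing interval of $T$ becomes a new interval. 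The fact that $\mathcal{S}$ is simple on at least three vertices gives enough freedom to arrange all of this consistently.

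The degenerate case, in which $T=T_1\to T_2$ with all of $T_1$ beating all of $T_2$, is both the reason that one vertex does not always suffice and the place where the bound of two is tight (by the Erd\H{o}s--Fried--Hajnal--Milner result quoted in the introduction, the unavoidable instances are odd transitive tournaments). Here my plan would be to introduce two vertices $u,v$ whose orientations deliberately straddle the $T_1/T_2$ partition --- e.g.\ arrange that some vertices in $T_2$ beat $u$ while some vertices in $T_1$ lose to $v$, so that neither $T_1$ nor $T_2$ survives as an interval --- and then check that the orientations can additionally be tuned so that the intervals inside $T_1$ and $T_2$ (broken by the inductive hypothesis on those substructures) remain broken after assembly, and no new intervals involving $u$ or $v$ are created.

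The main obstacle is the simultaneity of all these requirements: just two vertices must break every proper interval at every level of the substitution tree of $T$ while avoiding the creation of any new intervals (especially ones of the form $\{u,v\}$ or $\{u\}\cup I$). I expect the combinatorics in the non-degenerate case to work out comfortably because $|S|\geq 3$ leaves ample room in the orientations of $u,v$ across distinct blocks; the real work lies in the degenerate case, which also dictates why the bound cannot be improved to $1$.
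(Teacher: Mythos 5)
Your high-level strategy (induction via the substitution decomposition, with the degenerate two-block case as the hard one) is the same as the paper's, but as written the induction does not close, because your inductive hypothesis is too weak. Knowing only that each non-singleton block $T_s$ admits \emph{some} two-point simple extension tells you nothing about how to orient a \emph{shared} pair $u,v$ into all the blocks simultaneously, and in particular it does not rule out that the orientations forced inside the blocks make $V(T)$, $\{u,v\}$, or $\{u\}\cup I$ into new intervals. The paper's key idea, which is missing from your plan, is to strengthen the induction: one proves that there is a \emph{one-point} extension $T^1=T\cup\{x\}$ all of whose proper intervals contain $x$ (and which is in fact simple except when $n=3$ or $T$ is an odd chain), together with a companion $T^2=T\cup\{y\}$ obtained by reversing all edges at the new vertex. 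A single shared vertex $x$ then serves as the inductive one-point extension of \emph{every} non-singleton block at once, the reversal trick guarantees that any surviving interval of $T^1$ (necessarily through $x$) is destroyed in $T^{12}$, and the second vertex is only actually spent in the exceptional cases. Without some such strengthening, "two vertices jointly break every interval of every block" is not something you can extract from the bare statement being proved.

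Two concrete points where your sketch would fail as stated. First, your claim that $|S|\geq 3$ "leaves ample room" overlooks that there is no simple tournament on $4$ vertices: the cyclic triangle is simple with $|S|=3$, yet it has no one-point simple extension and needs a bespoke two-point construction (Figure~\ref{fig-3-vertex-tournament} in the paper), so the non-degenerate case is not uniformly comfortable. Second, in the degenerate case the decomposition $T=S[A_1,A_2]$ is not unique, and the proof hinges on \emph{choosing} it well (e.g.\ for an odd chain, taking $A_1$ an even chain and $A_2$ an odd chain; otherwise taking $A_1$ with no dominated vertex when possible); your plan to "tune the orientations" does not engage with this choice, which is where the actual work lies.
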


\begin{proposition}[Erd{\H{o}}s, Hajnal and Milner~{\cite{erdos:simple-one-poin:}}]\label{tournament-one-point}
A tournament $T$ has a one-point simple extension if and only if $T$ is not finite odd chain with $5$ or more vertices.
\end{proposition}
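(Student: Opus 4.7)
The plan is to prove both directions of the biconditional separately. For the forward direction (an odd chain of length $\ge 5$ admits no one-point simple extension), I would let $T$ be the transitive tournament $v_1\to v_2\to\cdots\to v_n$ and suppose, for contradiction, that $T'$ is a simple extension of $T$ obtained by adding a single new vertex $v$. Partition $V(T)$ into $B=\{v_i:v\to v_i\}$ and $L=\{v_i:v_i\to v\}$. First, no two consecutive chain vertices $v_i,v_{i+1}$ can lie in the same part: otherwise, by transitivity of $T$, every $v_j$ with $j\ne i,i+1$ relates identically to $v_i$ and $v_{i+1}$, and so does $v$, making $\{v_i,v_{i+1}\}$ a proper interval of $T'$. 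Hence $B$ and $L$ strictly alternate along the chain.

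Since $n$ is odd, the endpoints $v_1$ and $v_n$ must lie in the same part. If both are in $L$, then $I=\{v\}\cup\{v_2,\ldots,v_n\}$ is an interval of $T'$: the sole outside vertex $v_1$ beats every $v_i$ with $i\ge 2$ by transitivity of $T$, and beats $v$ as well since $v_1\in L$. This interval is proper (size $n\ge 5$ with non-empty complement), contradicting the simplicity of $T'$. The case where both endpoints lie in $B$ is symmetric, with interval $I=\{v\}\cup\{v_1,\ldots,v_{n-1}\}$ and outsider $v_n$.

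For the reverse direction, I would proceed by induction on $|T|$, handling $|T|\le 2$ directly (any such $T$ embeds into the $3$-cycle) and guiding the inductive step by the substitution decomposition $T=\mathcal{S}[T_s:s\in S]$ given by Theorem~\ref{substdecompthm}. In the non-degenerate case $|S|\ge 3$, the quotient $\mathcal{S}$ is itself a simple tournament on at least three vertices and each block $T_s$ is strictly smaller than $T$; the extension is built by combining inductive one-point extensions within each block (or suitable embeddings into simple tournaments when a block happens to be an excluded chain) with a single new vertex $v$ whose pattern of relations to the blocks is chosen so that both interval lifts from $\mathcal{S}$ and intra-block intervals are destroyed. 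In the degenerate case $|S|=2$, iterating the decomposition realises $T$ as a chain $T_1\to T_2\to\cdots\to T_k$ of maximal non-degenerate pieces, and $v$'s connections to the pieces are designed using the parity of $k$ and the internal structure of the pieces.

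The main obstacle lies precisely in the degenerate chain case, which is exactly where the forward direction detects the obstruction: when $k\ge 5$ is odd and every piece $T_i$ is a single vertex, no choice of $v$ avoids leaving a large interval at an endpoint. Away from this obstruction one must exploit one of three escape routes: (i) $k$ is even, in which case an alternating pattern of $v$'s relations to the pieces avoids both consecutive-pair intervals and endpoint-based intervals; (ii) $k\le 4$, handled case by case; or (iii) some piece $T_i$ is non-trivial, whose internal simple structure can be leveraged to place $v$ so that the outer chain is broken. Combining these subcases with the inductive hypotheses on smaller components is the main technical work of the argument.
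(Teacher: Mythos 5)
Your forward direction is correct and complete: the alternation-forcing step (consecutive chain vertices lying in the same part of $B\cup L$ form a $2$-element proper interval), combined with the parity of $n$ to place both endpoints in the same part, and then the $n$-element interval $\{v\}\cup\{v_2,\dots,v_n\}$ (or its mirror) with the single outside vertex dominating it, is a clean, self-contained proof that odd chains on at least $5$ vertices admit no one-point simple extension. This is worth noting because the paper never proves this direction at all --- it only cites Erd\H{o}s--Hajnal--Milner and remarks that the phenomenon is ``initially surprising.'' One caveat your own argument exposes: it applies verbatim to the $3$-chain, and indeed no tournament on four vertices is simple (as the paper itself observes in the proof of Theorem~\ref{tournament}), so the $3$-chain and the $3$-cycle are further exceptions which the proposition as literally stated, and hence your reverse direction, cannot accommodate; you would need to exclude $|T|=3$ explicitly rather than pass directly from the base case $|T|\le 2$ to the inductive step.

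The reverse direction follows the paper's strategy (induction over the substitution decomposition of Theorem~\ref{substdecompthm}) but is only a plan, and as formulated the induction does not close. The problem is concrete: if some non-trivial block $A_s$ of $T=\mathcal{S}[A_s:s\in S]$ is itself an odd chain on at least $5$ vertices, your inductive hypothesis gives you nothing usable --- $A_s$ has no one-point simple extension, and your proposed escape of embedding such a block into a simple tournament costs two points, which blows the one-point budget. You cannot sidestep the block either, since any proper interval of $T\cup\{v\}$ contained in a single block and avoiding $v$ survives as an interval of the whole extension, so the one new vertex must break the intra-block intervals of every non-trivial block simultaneously. The paper's resolution is to prove a \emph{stronger} inductive statement: every tournament on $n\ge 2$ vertices has a one-point extension $T^1=T\cup\{x\}$ in which every proper interval contains $x$ (this is achievable even for odd chains), together with a companion extension $T^2$ with the arrows at the new vertex reversed; simplicity of the full extension is then recovered at the top level because intervals through $x$ are destroyed by the quotient or by the other blocks. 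Your sketch of the degenerate case likewise omits the decisive choices the paper makes there --- for instance, splitting an odd chain as $\mathcal{S}[A_1,A_2]$ with $A_1$ an even chain and $A_2$ an odd one, and controlling where $x$ sits relative to the extremal vertices of each part --- so the ``main technical work'' you defer is precisely where the proof lives.
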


Note that these results hold for tournaments of arbitrary cardinality, though they were originally proved for finite tournaments by Moon~\cite{moon:embedding-tourn:}. We give here another proof of the finite case using the substitution decomposition,
in order to introduce the methods we will use in the following sections, and to be able to compare those later results with the tournament case.

\begin{proof}
We proceed by induction on $|T|=n\geq 2$ by constructing two one-point extensions, $T^1$ and $T^2$, satisfying:
\begin{itemize}
\item All proper intervals of $T^1=T\cup\{x\}$ and $T^2=T\cup\{y\}$ must contain $x$ and $y$, respectively.
\item $T^1=T\cup\{x\}$, constructed as described below, is simple unless $n=3$ or $T$ is a finite odd chain.
\item $T^2=T\cup\{y\}$ is derived from $T^1$ by the rule $y\rightarrow v$ if and only if $x\leftarrow v$ for each $v\in V(T)$.
\item If $n=3$ or $T$ is a finite odd chain, the extension $T^{12}$, with edgeset $E(T^1)\cup E(T^2)\cup\{x\rightarrow y\}$, is a two-point simple extension.
\end{itemize}

The base case is $n=2$, for which we may assume $T=\{u,v\}$ with $u\rightarrow v$. We define $x$ by $v\rightarrow x\rightarrow u$, and note that $T^1$ is simple, while $T^2$ has proper intervals $\{u,y\}$ and $\{v,y\}$, both of which contain $y$.

Now suppose $n\geq 3$, and consider the substitution decomposition $T=S[A_s:s \in S]$ where the simple quotient $S$ of $T$ has $m\geq 2$ vertices. Suppose first that $m>2$, in which case the tournaments $A_s$ are defined uniquely, we construct $T^1$ as follows. First, if every $A_s$ contains a single vertex then $T=S$ is simple. Unless $|T|=3$ (a case which will be covered separately later), the addition of $x$ will preserve simplicity providing it does not have the same neighbourhoods as any existing vertex of $T$ and it does not satisfy $x\rightarrow T$ or $T\rightarrow x$.  Note that there are $2^n-n-2$ ways of choosing such an $x$.  Moreover, since $T$ was simple the addition of the vertex $y$ to form $T^2$ can introduce proper intervals only of the form $\{t,y\}$ for $t \in T$ (note that $V(T)$ cannot form an interval in $T^2$ as then it would also have formed an interval in $T^1$, a contradiction).

Thus we may assume that at least one interval $A_s$ contains more than one vertex. In this case, we add a vertex $x$ which we connect to each nonsingleton interval $A_s$ inductively, forming $A_s^1$. Now fix any nonsingleton interval $A_{s^*}$, and for each singleton interval $A_{s}$ form $A_{s}^1$ to satisfy $x\rightarrow A_{s}$ if and only if $A_{s}\rightarrow A_{s^*}$.

\begin{claim}The one-point extension $T^1$ of $T$ is simple.\end{claim}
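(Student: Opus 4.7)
The plan is to argue by contradiction. Suppose $I$ is a proper interval of $T^1$, and derive a contradiction by analyzing how $I$ intersects the blocks of the substitution decomposition $T = S[A_s:s\in S]$, using the inductive hypothesis on each nonsingleton $A_s^1$. The key preliminary observation is that for every nonsingleton block $A_s$, the set $I \cap A_s^1$ is automatically an interval of the induced substructure $A_s^1 = A_s \cup \{x\}$: for any $u,v \in I \cap A_s^1$ and $w \in A_s^1 \setminus (I \cap A_s^1)$ we have $w \notin I$ in $T^1$, so $u \to w \Leftrightarrow v \to w$. By the inductive hypothesis every proper interval of $A_s^1$ must contain $x$, which significantly constrains $I \cap A_s$.

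Next I would split on whether $x \in I$. If $x \notin I$, then $I \cap A_s^1 = I \cap A_s$ does not contain $x$, so by induction it is not a proper interval of $A_s^1$; it cannot equal $A_s$ either, since $A_s$ would then be a proper interval of $A_s^1$ missing $x$. Hence $I \cap A_s$ is empty or a singleton for every nonsingleton block, while $I$ itself is an interval of $T$ by restriction. The substitution decomposition then forces $I$ to be a union of whole blocks of $T$, each necessarily a singleton; these blocks correspond to a set $J \subsetneq S$ of size at least two which is an interval of $S$, contradicting the simplicity of $S$.

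If $x \in I$, then in the generic subcase where each nonsingleton $A_s^1$ is simple we get $I \cap A_s^1 \in \{\{x\}, A_s^1\}$ and hence $I \cap A_s \in \{\emptyset, A_s\}$; write $I = \{x\} \cup \bigcup_{s \in J} A_s$ with $J \subsetneq S$. Choose $t \in S \setminus J$ together with some $w \in A_t \setminus I$, and compare $x \to w$ with $v \to w$ for an appropriate $v \in A_s \cap I$ where $s \in J$. If $A_t$ is nonsingleton, the inductively forced non-uniform attachment of $x$ to $A_t$ provides a choice of $w$ violating the interval property. If $A_t = \{v_t\}$ is a singleton, the construction rule $x \to v_t \iff v_t \to A_{s^*}$ makes $x \to v_t$ the opposite of $v \to v_t$ for any $v \in A_{s^*}$, giving an immediate contradiction when $s^* \in J$; if instead $s^* \notin J$, then $A_{s^*}$ is itself a nonsingleton block outside $I$ and the previous sub-case applies.

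The main obstacle is the remaining subcase of $x \in I$ when some $A_s^1$ fails to be simple --- precisely the inductive exceptions when $A_s$ is a three-vertex transitive tournament or a finite odd chain. Here the dichotomy $I \cap A_s \in \{\emptyset, A_s\}$ is not automatic, so one has to show directly that any intermediate intersection $\emptyset \subsetneq I \cap A_s \subsetneq A_s$ cannot persist as part of an interval of $T^1$, by locating some other block $A_t$ disjoint from $I$ and exploiting the mismatch between $x$'s inductively chosen (non-uniform) relations with $A_t$ and the uniform block-to-block relation prescribed by $s \to t$ in the simple quotient $S$.
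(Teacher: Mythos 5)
Your strategy is essentially the paper's: induct, observe that $I\cap A_s^1$ is an interval of the substructure $A_s^1$, and play the non-uniform attachment of $x$ to the blocks against the simplicity of the quotient $S$. The branches you carry out are sound (modulo some glossing in the $x\notin I$ branch, where ``$I$ is a union of whole blocks'' needs the observation that an interval of $T$ meeting two blocks must, by simplicity of $S$, meet all of them and then contain all of them). But the proof is genuinely incomplete at the point you yourself flag as ``the main obstacle'': the subcase $x\in I$ with $\emptyset\subsetneq I\cap A_s\subsetneq A_s$ for a block whose extension $A_s^1$ is not simple. You describe a strategy for it but do not execute it, and this subcase is not a technicality --- for odd-chain or three-element blocks it is exactly where the inductive hypothesis is weakest.

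The obstacle is illusory, however, and it arises because you are trying to prove more than you need: the dichotomy $I\cap A_s\in\{\emptyset,A_s\}$ is never required. Once you know $x\in I$ and that $I$ contains \emph{some} vertex $u$ of \emph{some} block $A_s$, your own generic argument applies with no information about the rest of $I\cap A_s$. Indeed, $u$ relates uniformly to every block $A_t$ with $t\neq s$, whereas $x$ relates non-uniformly to every nonsingleton such $A_t$ (otherwise $A_t$ would be a proper interval of $A_t^1$ avoiding $x$, contrary to induction); and if $A_s=A_{s^*}$ is the unique nonsingleton block, then $x$ and $u$ disagree on every singleton block by the rule $x\rightarrow A_t\Leftrightarrow A_t\rightarrow A_{s^*}$. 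Either way $u$ and $x$ disagree on some vertex $w$ lying in a block other than $A_s$; since $u,x\in I$, this forces $w\in I$, so $I$ meets two distinct blocks of $T$, and then the simplicity of $S$ (your straddling argument from the $x\notin I$ branch) gives $V(T)\subseteq I$, whence $I$ is everything because $x\in I$. This is precisely how the paper organises its Cases 2 and 3: everything is reduced to ``$I$ meets two blocks'' rather than to a classification of $I\cap A_s$. With that reorganisation your proposal closes; without it, the postponed subcase is the remaining substance of the claim.
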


\begin{proof}
Note first that by the inductive construction we cannot have $x\rightarrow T$ or $T\rightarrow x$ in $T^1$, otherwise the extension $A_s^1$ of the nonsingleton block $A_s$ would contradict the inductive hypothesis. Now suppose $I$ is a non-singleton interval of $T^1$, and let $u,v\in I$ be two distinct vertices. The possible positions of $u$ and $v$ in $T^1$ give rise to three cases.

\textit{Case 1:} $u\in A_s$, $v\in A_{s'}$ for some $s,s'\in S$ with $s\neq s'$. Then by the simplicity of the quotient $S$ we have $V(T)\subseteq I$, and $x$ is then included since we cannot have $x\rightarrow T$ or $T\rightarrow x$.

\textit{Case 2:} $u\in A_s$, $v=x$, for some $s\in S$. If there exists some other nonsingleton block $A_{s'}$ of $T$, then we have either $u\rightarrow A_{s'}$ or $A_{s'}\rightarrow u$, but not $v\rightarrow A_{s'}$ or $A_{s'}\rightarrow v$, reducing to Case 1. If every other $A_{s'}$ is a singleton, then $A_s$ is the only nonsingleton block so $s=s^*$, and, by the construction, $u$ and $v$ will disagree on every singleton block, again reducing to Case 1.

\textit{Case 3:} $u,v\in A_s$ for some $s\in S$. By induction, the only possible proper intervals of $A_s^1$ are those which involve $x$, reducing immediately to Case 2.
\end{proof}

Consequently, the one-point extension $T^2$, if not itself simple, contains proper intervals each of which must contain $y$, as any other interval would also be an interval in $T^1$.

\begin{figure}
\begin{center}
\begin{tabular}{ccc}
\psset{xunit=0.014in, yunit=0.014in}
\psset{linewidth=0.005in}
\begin{pspicture}(0,0)(60,50)

\Cnode*[fillstyle=solid,radius=0.05in](0,40){p1}
\Cnode*[fillstyle=solid,radius=0.05in](30,40){p2}
\Cnode*[fillstyle=solid,radius=0.05in](60,40){p3}
\Cnode[radius=0.05in](30,10){x}
\psset{arrowsize=3pt 2}
\psset{arrowinset=0.2}
\ncline{->}{p1}{p2}
\ncline{->}{p2}{p3}
\nccurve[angleA=135,angleB=45]{<-}{p3}{p1}
\ncline{<-}{p1}{x}
\ncline{->}{p2}{x}
\ncline{<-}{p3}{x}
\end{pspicture}
&\hspace{10pt}&

\psset{xunit=0.014in, yunit=0.014in}
\psset{linewidth=0.005in}
\begin{pspicture}(0,0)(60,60)

\Cnode*[fillstyle=solid,radius=0.05in](0,40){p1}
\Cnode*[fillstyle=solid,radius=0.05in](30,40){p2}
\Cnode*[fillstyle=solid,radius=0.05in](60,40){p3}
\Cnode[radius=0.05in](30,10){x}
\psset{arrowsize=3pt 2}
\psset{arrowinset=0.2}
\ncline{->}{p1}{p2}
\ncline{->}{p2}{p3}
\nccurve[angleA=135,angleB=45]{->}{p3}{p1}
\ncline{<-}{p1}{x}
\ncline{->}{p2}{x}
\ncline{<-}{p3}{x}
\end{pspicture}
\end{tabular}
\end{center}
\caption{One-vertex (non-simple) extensions of tournaments on $3$ vertices.}\label{fig-3-vertex-tournament}
\end{figure}
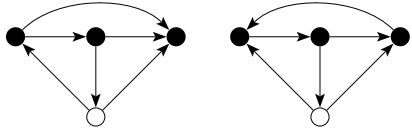

This leaves the case where $|T|=3$ or $T=S[A_1,A_2]$ with $|S|=2$, $A_1\rightarrow A_2$, and $A_1$ and $A_2$ not necessarily uniquely defined. Note that there are no simple tournaments on four vertices, and so if $|T|=3$ a simple extension will require at least two points. For the induction, however, we must create $T^1$ for which every proper interval must include the additional vertex, and this is done in Figure~\ref{fig-3-vertex-tournament}. Note then that the two-vertex extensions $T^{12}$ are simple in both cases.

Thus we may now assume that $T$ has at least four vertices, and $T$ may be written (possibly not uniquely) as $T=S[A_1,A_2]$. If $T$ is a finite odd chain, then we may pick $A_2$ to be a finite odd chain and $A_1$ a finite even chain.
Denote by $a$ the vertex of $A_1$ for which $A_1\setminus\{a\}\rightarrow a$, and by $b$ the vertex of $A_2$ for which $b\rightarrow A_2\setminus\{b\}$. We adjoin a vertex $x$ to $T$ so that $A_2\cup\{x\}$ forms whichever of $A_2^1$ or $A_2^2$ ensures $x\rightarrow b$, and $A_1\cup\{x\}$ forms the simple one-point extension $A_1^1$. The resulting extension $T^1$ is not simple, but we note that any proper interval must contain the vertex $x$, as the only other candidate we need to check is $\{a,b\}$, which, since we fixed $x\rightarrow b$, cannot be an interval unless $x\rightarrow a$, contradicting the simplicity of $A_1^1$. We observe also that $T^2$ must satisfy the same property, and consequently the two-point simple extension $T^{12}$ is simple.

Assuming herein that $T$ is not a finite odd chain, consider first the case where the decomposition $T=S[A_1,A_2]$ satisfies $|A_2|=1$.
Since $A_1$ (which contains at least three vertices) cannot be a chain, there is no $a\in A_1$ for which $A_1\setminus\{a\}\rightarrow a$. We construct $T^1=T\cup\{x\}$ so that $A_2\rightarrow x$ and so that $A_1^1=A^1\cup\{x\}$ is, by induction, a simple extension of $A_1$. It is easy to check that $T^1$ is simple, and then $T^2$ can contain only proper intervals involving $x$. A similar argument applies when $|A_1|=1$.

Finally, if neither $|A_1|=1$ nor $|A_2|=1$ then by induction we may adjoin a single vertex $x$ for which $A_1\cup\{x\}$ corresponds to the simple extension $A_1^1$ and $A_2\cup\{x\}$ to $A_2^1$. We claim that the tournament $T\cup\{x\}$ gives us $T^1$. Since $A_1^1$ and $A_2^1$ are simple one-point extensions, we need only consider intervals of the form $\{a,b\}$ with $a\in A_1$ and $b\in A_2$. Since $a\rightarrow A_2$, this would imply $b\rightarrow A_2\setminus\{b\}$ from which we can conclude that $x\rightarrow b$, as $A_2^1$ is simple. Similarly, $A_1\rightarrow b$ implies $A_1\setminus\{a\}\rightarrow a$ and hence (by the simplicity of $A_1^1$) $a\rightarrow x$, so $a$ and $b$ disagree on $x$, completing the proof.
\end{proof}

It is initially surprising that the case where $T$ is a finite odd chain with $5$ or more vertices requires two additional vertices rather than one.  Figure~\ref{fig-transitive-tournament} shows a two-point simple extension of a chain on $7$ vertices.

\begin{figure}
\begin{center}
\psset{xunit=0.014in, yunit=0.014in}
\psset{linewidth=0.005in}
\begin{pspicture}(0,0)(120,60)

\Cnode*[fillstyle=solid,radius=0.05in](0,50){p1}
\Cnode*[fillstyle=solid,radius=0.05in](20,50){p2}
\Cnode*[fillstyle=solid,radius=0.05in](40,50){p3}
\Cnode*[fillstyle=solid,radius=0.05in](60,50){p4}
\Cnode*[fillstyle=solid,radius=0.05in](80,50){p5}
\Cnode*[fillstyle=solid,radius=0.05in](100,50){p6}
\Cnode*[fillstyle=solid,radius=0.05in](120,50){p7}
\Cnode[radius=0.05in](40,10){x}
\Cnode[radius=0.05in](80,10){y}
\psset{arrowsize=3pt 2}
\psset{arrowinset=0.2}
\ncline{->}{p1}{p2}
\ncline{->}{p2}{p3}
\ncline{->}{p3}{p4}
\ncline{->}{p4}{p5}
\ncline{->}{p5}{p6}
\ncline{->}{p6}{p7}
\ncline{<-}{p1}{x}
\ncline{->}{p2}{x}
\ncline{<-}{p3}{x}
\ncline{->}{p4}{x}
\ncline{<-}{p5}{x}
\ncline{->}{p6}{x}
\ncline{<-}{p7}{x}
\ncline{->}{p1}{y}
\ncline{<-}{p2}{y}
\ncline{->}{p3}{y}
\ncline{<-}{p4}{y}
\ncline{->}{p5}{y}
\ncline{<-}{p6}{y}
\ncline{->}{p7}{y}
\ncline{->}{x}{y}
\end{pspicture}
\end{center}
\caption{A $2$-vertex simple extension of a transitive tournament on $7$ vertices.}\label{fig-transitive-tournament}
\end{figure}
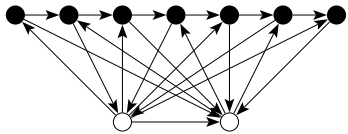


\section{Graphs}\label{sec-graphs}

In a graph $G$, an interval $I$ is a set of vertices for which
$N(u)\setminus I = N(v)\setminus I$ for every $u,v\in I$. A graph is
simple if it contains no proper intervals, but it is worth noting
that the word ``simple'' is commonly used to describe graphs with no multiple edges or loops, and our
notion is more often referred to as \emph{prime} or
\emph{indecomposable}. These graphs have been the subject of considerable study, see for example Ehrenfeucht, Harju, and Rozenberg~\cite{ehrenfeucht:the-theory-of-2:}, Ille~\cite{ille:indecomposable-:}, and Sabidussi~\cite{sabidussi:graph-derivativ:}. A survey of indecomposability and the substitution decomposition in graphs can be found in Brandst{\"a}dt, Le, and Spinrad's text~\cite{brandstadt:graph-classes:-:}. Simple extensions of graphs have received some attention in the past. In particular, we have:

\begin{lemma}[Sumner~{\cite[Theorem 2.45]{sumner:indecomposable-:}}]\label{completegraph}
The complete graph $K_n$ has a simple extension with $\lceil\log_2(n+1)\rceil$
additional vertices.
\end{lemma}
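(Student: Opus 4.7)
The plan is to adjoin $k=\lceil\log_2(n+1)\rceil$ new vertices $x_1,\dots,x_k$ forming an independent set, and to attach each original vertex $v_i$ to the $x_j$'s according to a ``signature'' $S_i\subseteq\{x_1,\dots,x_k\}$: set $v_i\sim x_j\Leftrightarrow x_j\in S_i$. There are $2^k-1\geq n$ non-full subsets of $\{x_1,\dots,x_k\}$, and $n\geq k$, so one can take the $S_i$ to be $n$ distinct non-full subsets chosen so that every singleton $\{x_j\}$ appears as some $S_i$. The non-fullness and the presence of the singletons are the two properties that will drive the proof.

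To check simplicity I would suppose $I$ is a proper interval and split it as $I=V_I\cup X_I$ along original vs.\ new vertices, handling three cases. If $X_I=\emptyset$ then $|V_I|\geq 2$; any two vertices of $V_I$ must agree in their adjacencies to every $x_j$ outside $I$, which forces their signatures to coincide and contradicts distinctness. If $V_I=\emptyset$ then $|X_I|\geq 2$, and the interval condition demands that every signature intersect $X_I$ in either $\emptyset$ or all of $X_I$; but for any $x_\ell\in X_I$, the singleton signature $\{x_\ell\}$ (present by construction) intersects $X_I$ in $\{x_\ell\}$, a contradiction.

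In the mixed case with $V_I,X_I\neq\emptyset$, the independence of the new vertices forces $S_i\subseteq X_I$ for each $v_i\in V_I$, while the completeness of $K_n$ forces $X_I\subseteq S_{v'}$ for each $v'\in V(K_n)\setminus V_I$. Since signatures are non-full and $I$ is proper, some $x_\ell\notin X_I$ exists. The vertex $v$ with singleton signature $\{x_\ell\}$ must land on one side of the partition: if $v\in V_I$ then $\{x_\ell\}\subseteq X_I$, contradicting $x_\ell\notin X_I$; if $v\in V(K_n)\setminus V_I$ then $X_I\subseteq\{x_\ell\}$, forcing the nonempty $X_I$ to equal $\{x_\ell\}$, again a contradiction. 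The main subtlety is precisely in choosing the signatures: the naive attempt using $n$ distinct \emph{nonempty} subsets fails when $n=2^k-1$ because the full signature produces a vertex adjacent to everything and hence a proper interval of size $n+k-1$. Excluding the full subset---still leaving $2^k-1\geq n$ candidates---and insisting on the singletons being present is what makes the casework collapse cleanly.
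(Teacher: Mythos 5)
Your construction is correct, and it is essentially the standard one: the paper itself gives no proof of this lemma (it is quoted from Sumner's thesis), but the surrounding text --- the lower-bound remark following the lemma and the handling of the complete-graph degenerate case inside the proof of Theorem~\ref{thm-graphs} --- makes clear that the intended construction is exactly yours, namely an added independent set $B$ with each vertex of $K_n$ receiving a distinct, non-full neighbourhood in $B$. Your additional requirement that every singleton $\{x_j\}$ occur as a signature is a genuine and necessary refinement (distinctness and non-fullness alone do not suffice: with $n=2$, $k=2$ and signatures $\emptyset,\{x_1\}$, the pair $\{v_1,x_1\}$ is a proper interval), and your verification that $n\geq k=\lceil\log_2(n+1)\rceil$ makes this requirement satisfiable is the right thing to check. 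The only loose end is the degenerate case $n=1$, where $k=1$ and the unique singleton $\{x_1\}$ is the full set, so your prescription cannot be followed literally; there the statement is trivial since every two-element structure is simple.
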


The bound $m=\lceil\log_2(n+1)\rceil$ is also the smallest
possible, for were we to add a set $B$ of $m$ vertices with $n>2^m-1$, then either two vertices in $G$ have the same neighbourhood in $G\cup B$, or one vertex of $G$ is connected to
every other vertex in $G\cup B$, both of which give an interval.

\begin{theorem}\label{thm-graphs}
Every graph $G$ has a simple extension with at most
$m=\lceil\log_2(|V(G)|+1)\rceil$ additional vertices.
\end{theorem}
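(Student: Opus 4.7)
The plan is to argue by strong induction on $n = |V(G)|$. The base cases $n \leq 2$ are trivial since every such graph is already simple, so assume $n \geq 3$ and write $m = \lceil \log_2(n+1) \rceil$. I would begin by considering the substitution decomposition $G = S[G_s : s \in S]$ provided by Theorem~\ref{substdecompthm}. If $G$ itself is simple there is nothing to prove, so assume some $G_s$ has $|V(G_s)| \geq 2$.

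In the non-degenerate case, $|S| \geq 4$ and $S$ is simple, so every $G_s$ has strictly fewer than $n$ vertices and by induction admits a simple extension $G_s^+$ with at most $\lceil \log_2(|V(G_s)|+1)\rceil \leq \lceil \log_2 n\rceil \leq m$ additional vertices. The idea is to share the added vertices across all of the $G_s$ simultaneously: take a single set $B$ with $|B| = \max_s\bigl(|V(G_s^+)| - |V(G_s)|\bigr)$, and for each $s$ realise $G_s^+$ inside $G \cup B$ by attaching a prefix of $B$ to $G_s$ according to the inductive extension (with the remaining $b_i$'s attached to $G_s$ in some fixed default way). To verify that $G \cup B$ is a simple extension of $G$, one analyses a hypothetical proper interval $I$: if $V(G) \cap I$ lies inside a single $V(G_s)$ then $I$ is a proper interval of $G_s^+$ and is ruled out by the inductive hypothesis, whereas otherwise the set $\{s \in S : V(G_s) \cap I \neq \emptyset\}$ is forced to be an interval of $S$, and simplicity of $S$ then makes $V(G) \subseteq I$ and finally $B \subseteq I$.

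In the degenerate case $|S| = 2$ one of $G, \bar G$ is disconnected; since a graph and its complement have the same intervals, we may complement if necessary and assume $G = H_1 \sqcup \cdots \sqcup H_k$ with $k \geq 2$. If every $H_i$ is a singleton then $G = \overline{K_n}$, and applying Lemma~\ref{completegraph} to $K_n$ and complementing the resulting extension gives a simple extension of $G$ with exactly $m$ added vertices. Otherwise, following Sumner's template, I would add $m$ vertices $b_1,\ldots,b_m$, assign each $v \in V(G)$ a distinct nonzero code $c(v) \in \{0,1\}^m$, set $v \sim b_i$ iff $c(v)_i = 1$, and choose the edges among the $b_i$'s (for instance by another instance of the Sumner construction on $B$) so as to destroy any would-be interval contained in $B$.

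The hard part will be the mixed-interval analysis in this last subcase. Any hypothetical proper interval $I = I_G \cup I_B$ of $G \cup B$ with both $I_G \subseteq V(G)$ and $I_B \subseteq B$ non-empty forces strong coupling conditions: the adjacencies in $G$ from each $w \in V(G) \setminus I_G$ into $I_G$ must match the bit pattern of $c(w)$ on the indices of $I_B$, and the bits of $c(v)$ for $v \in I_G$ on the indices of $B \setminus I_B$ must match the internal edges of $B$. Intervals inside $V(G)$ are broken automatically by the distinctness of the codes, intervals inside $B$ are broken by arranging distinct non-constant bit columns, and the interval $V(G)$ is broken by ensuring no bit column is all zeros or all ones. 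The obstacle is to simultaneously choose codes and internal $B$-edges that also rule out the mixed intervals while keeping $|B| = m$; the threshold $m = \lceil \log_2(n+1)\rceil$ is exactly where enough distinct nonzero codes become available, and it is tight because any fewer additions would force two vertices of $G$ to share a neighbourhood in $B$ or force some vertex to be joined to all of $B$, recreating an interval as already noted after Lemma~\ref{completegraph}.
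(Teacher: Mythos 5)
Your skeleton (induction through the substitution decomposition, with the added vertices shared across the blocks) is the same as the paper's, but there are two genuine gaps. First, your induction hypothesis --- the bare statement of the theorem --- is too weak to support the sharing step. When you ``realise $G_s^+$ inside $G\cup B$ by attaching a prefix of $B$ to $G_s$'' for every $s$ simultaneously, you are implicitly assuming that the inductive extensions of the different blocks all induce the \emph{same} graph on the shared prefix of $B$; nothing in the plain statement guarantees this (one block's added set could be a clique, another's an independent set, and they cannot both be identified with the same subset of $B$). The paper resolves exactly this by proving a stronger statement: every $G$ with $n\ge 2$ has a simple extension by an \emph{independent} set $B$ with $2\le|B|\le\lceil\log_2(n+1)\rceil$. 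That normal form is what makes the identifications consistent, and it forces the nontrivial base case $n=2$ and the addition of two vertices even when $G$ is already simple --- steps you skip as ``trivial'' but which the strengthened induction needs. Your interval analysis in this case also omits intervals contained in $B$, intervals of the form $\{v\}\cup I_B$, and the treatment of singleton blocks (which must be attached to $B$ so as to be distinguished from the big block's vertices).

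Second, and more seriously, in the degenerate case with at least one non-singleton component you abandon the induction and attempt a direct Sumner-style coding of \emph{all} of $V(G)$ by nonzero vectors in $\{0,1\}^m$, and you explicitly flag the mixed-interval analysis (intervals meeting both $V(G)$ and $B$) as an unresolved ``obstacle.'' That obstacle is the entire content of the theorem in this case: it is not clear that codes and internal $B$-edges satisfying all your coupling conditions exist, and no argument is offered. The paper avoids the problem by staying inductive: it extends the largest component $J_1$ by an independent set $B$, reuses subsets of $B$ for the other components (a single suitably chosen vertex of $B$ for singleton components), and only falls back on the Sumner bound in the extreme cases $G=K_n$ or $\overline{K_n}$. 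As written, your proof establishes the result only for $\overline{K_n}$ and for graphs whose decomposition tree never produces a degenerate node with a non-singleton child, so the argument is incomplete.
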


\begin{proof} First note that when $|G|=1$, we may trivially extend by 1 vertex to form $K_2$. We will prove, by induction on $|G|=n\geq 2$ the following statement: $G$ has a simple extension with a set $B$ of $m$ independent vertices\footnote{The assumption that $B$ is independent may be replaced by the requirement that $B$ be any graph specified in advance, but this adds considerably to the complexity and length of the proof.}, where $2\leq m\leq \lceil\log_2(n+1)\rceil$.

The base case $n=2$ may be seen in Figure~\ref{fig-two-point-graph}, so now suppose $n\geq 3$. Write $G = H[J_v:v\in V(H)]$ where $H$ is the simple quotient of $G$, and suppose first that $|H|>2$. If $H=G$ then the graph is already simple, but for the induction we need to add two vertices. As in the tournament case, we may preserve simplicity by adding vertices whose neighbourhoods in $G$ are distinct and neither the same as any vertex in $G$, empty, nor all of $V(G)$. As in the case of tournaments, are $2^n-n-2$ possible neighbourhoods, and for $n\geq 4$ we may pick any two of these for the new vertices.

Thus we may now suppose that $G\neq H$. By induction, each $J_v$ of size $|J_v|\geq 2$ can be extended by some independent set $B_v$ of size at most $\lceil\log_2(|V(J_v)|+1)\rceil$ so that $J_v\cup B_v$ is simple. Picking $x\in V(H)$ for which $B_x=B$ is largest, for every $v\neq x$ in $H$ with $|J_v|\geq 2$ we identify $B_v$ with any subset of $B$ of the correct size. For any $v\in H$ for which $|J_v|=1$, we connect $J_v$ to any vertex in $B$ that is not adjacent to all of $J_x$, and label this singleton set $B_v$.

\begin{figure}
\begin{center}
\begin{tabular}{ccc}
\psset{xunit=0.014in, yunit=0.014in}
\psset{linewidth=0.005in}
\begin{pspicture}(0,0)(40,40)
\Cnode*[fillstyle=solid,radius=0.04in](10,30){a}
\Cnode*[fillstyle=solid,radius=0.04in](30,30){b}
\Cnode[radius=0.04in](10,10){x}
\Cnode[radius=0.04in](30,10){y}
\ncline{-}{x}{a}
\ncline{-}{a}{y}
\ncline{-}{y}{b}
\end{pspicture}
&\hspace{10pt}&
\psset{xunit=0.014in, yunit=0.014in}
\psset{linewidth=0.005in}
\begin{pspicture}(0,0)(40,40)
\Cnode*[fillstyle=solid,radius=0.04in](10,30){a}
\Cnode*[fillstyle=solid,radius=0.04in](30,30){b}
\Cnode[radius=0.04in](10,10){x}
\Cnode[radius=0.04in](30,10){y}
\ncline{-}{a}{b}
\ncline{-}{x}{a}
\ncline{-}{b}{y}
\end{pspicture}
\end{tabular}
\end{center}
\caption{Two point simple extensions in the cases when $|G|=2$. In each case the filled nodes correspond to $G$, and the empty nodes to $B$.}\label{fig-two-point-graph}
\end{figure}
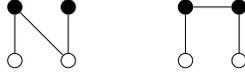

\begin{claim}
The extension $G\cup B$ is simple.
\end{claim}

\begin{proof}
Consider an interval $I$ with at least two vertices $a$ and $b$. There are four cases:

\textit{Case 1:} $a\in J_u$, $b\in J_v$ for $u\neq v$ in $H$. The simplicity of $H$ implies that $G\subseteq I$. In particular this gives $J_x\subseteq I$, which implies $B\subseteq I$ since $J_x\cup B$ is simple.

\textit{Case 2:} $a,b \in J_v\cup B_v$, $v\neq x$. By the simplicity of $J_v\cup B_v$ we have $J_v \cup B_v\subseteq I$. If $|J_v|\geq 2$ then by induction $|B_v|\geq 2$ implying that $J_x\subseteq I$, reducing to Case 1. If, say, $J_v=\{a\}$ then either $a$ is adjacent to everything in $J_x$ or nothing in $J_x$, while by the construction $b\in B_v$ is adjacent to some but not all of $J_x$, reducing to Case 1.

\textit{Case 3:} $a,b\in J_x\cup B$. We immediately have $J_x\cup B \subseteq I$, and so if there is some $v\neq x$ for which $|J_v|\geq 2$ then $J_v\subseteq I$ gives Case 1, so we now assume $|J_v|=1$ for all $v\neq x$ in $H$. Pick any such $J_v=\{c\}$ we know there exist $b_1,b_2\in B$ with $b_1\sim c$ but $b_2\not\sim c$, implying $c\in I$, which again reduces to Case 1.

\textit{Case 4:} $a\in J_v$, $b\in B\setminus B_v$. Since $J_v\cup B_v$ is simple, we know that $a$ is adjacent to some $c\in J_v\cup B_v$, while $b$ is not, reducing to Case 2.
\end{proof}

For the degenerate cases, let us first assume that $H=\overline{K_2}$, i.e. that
$G$ is disconnected, so $G=H[J_1,J_2]$ where $J_1$ and $J_2$ may be picked possibly in a number of ways. We arrange it so that $J_1$ is a largest connected component, and suppose first that $|J_1|\geq 2$. By induction, each of $J_1$ and $J_2$ may be extended by adding a set of vertices $B_1$ and $B_2$, respectively, unless $|J_2|=1$, where we pick any vertex of $B$ to act as the singleton set $B_2$. Fix $x\in \{1,2\}$ so that $B=B_x$ is the larger of $B_1$ and $B_2$, and associate the other set $B_{x'}$, $x'\in\{1,2\}$, $x'\neq x$, with any subset of $B$. If $|J_1|=1$ then $G$ is in fact $\overline{K_n}$, and but for the induction we could appeal to Lemma~\ref{completegraph}. By induction, there exists a set $B_2$ of $\lceil\log_2 n\rceil$ vertices for which $J_2\cup B_2$ is a simple extension of $J_2$. Unless $\lceil\log_2 (n+1)\rceil = \lceil\log_2 n\rceil +1$, we may assign a nonempty neighbourhood of $B_2$ to the single vertex of $J_1$ that is different to the neighbourhood of every vertex in $J_2$. In the exceptional case where $\lceil\log_2 (n+1)\rceil = \lceil\log_2 n\rceil +1$, we assign any nonempty neighbourhood of $B_2$ to $J_1$, observing that there exists a vertex $v\in J_2$ with the same neighbourhood. However, we are now permitted to add another new vertex $b$, attached to $J_1$ and nothing else, so that $v$ no longer has the same neighbourhood as the single vertex in $J_1$.

In the case that $H=K_2$, only a few modifications need to be made to the above. We first pick $J_1$ so that $\overline{J_1}$ is the largest possible connected component. When $J_1$ is not a singleton but $J_2$ is, we select $B_2=\{b\}$ by choosing any $b\in B$ for which $b$ is not connected to all of $J_1$ (as in the non-degenerate case). When $J_1$ is a singleton (so $G$ is a complete graph), it can have any non-complete neighbourhood of $B=B_2$ (including the empty neighbourhood), providing no vertex of $J_2$ already has that neighbourhood. If no such neighbourhood exists, then we may add a new vertex to $B$ and connect it to $J_1$. The analysis of these degenerate cases is similar to and easier than the analysis in the nondegenerate case above.
\end{proof}

Although this bound is tight for complete and independent graphs, the inductive construction used in the proof actually does somewhat better in many cases. Unless the graph is complete or empty, at each stage of the induction we add only as many vertices as are required by any single block $J_v$ of the substitution decomposition. This may be thought of iteratively in terms of the substitution decomposition tree: recalling that the degenerate cases are handled in this tree by breaking up the graph into as many parts as possible, the largest such degenerate node all of whose children are leaves will dictate how many additional vertices are needed. If no such degenerate node exists, then the inductive construction above requires only two additional vertices.


\section{Permutations}\label{sec-permutations}

In a permutation $\pi$ of $[n]$ an interval is a collection of entries that are
contiguous both by position and value. If a permutation is viewed as a set with two linear orders $<$ and $\prec$, an interval is a set of points that is contiguous under both
$<$ and $\prec$. Intervals are  easily identified in the plot of a permutation as sets of points enclosed in an axis-parallel rectangle, with no points lying in the regions above, below, to the left or to the right (see Figure~\ref{fig-permutation-interval}). To embed a given $\pi$ in a simple permutation, therefore, we must ensure that every axis-parallel rectangle containing at least two points of $\pi$
is ``broken'' by at least one of the extension points.

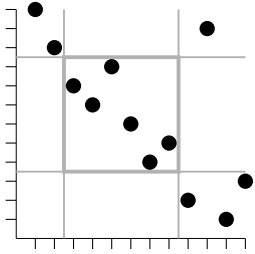
\begin{figure}
\begin{center}
\psset{xunit=0.01in, yunit=0.01in} \psset{linewidth=0.005in}
\begin{pspicture}(0,0)(120,120)
\psaxes[dy=10,dx=10,tickstyle=bottom,showorigin=false,labels=none](0,0)(120,120)
\psframe[linecolor=darkgray,linewidth=0.02in](24,34)(86,96)
\psline[linecolor=darkgray,linewidth=0.01in](25,0)(25,120)
\psline[linecolor=darkgray,linewidth=0.01in](85,0)(85,120)
\psline[linecolor=darkgray,linewidth=0.01in](0,35)(120,35)
\psline[linecolor=darkgray,linewidth=0.01in](0,95)(120,95)

\pscircle*(10,120){0.04in}
\pscircle*(20,100){0.04in}
\pscircle*(30,80){0.04in}
\pscircle*(40,70){0.04in}
\pscircle*(50,90){0.04in}
\pscircle*(60,60){0.04in}
\pscircle*(70,40){0.04in}
\pscircle*(80,50){0.04in}
\pscircle*(90,20){0.04in}
\pscircle*(100,110){0.04in}
\pscircle*(110,10){0.04in}
\pscircle*(120,30){0.04in}
\end{pspicture}
\end{center}
\caption{An interval in the plot of a permutation.}
\label{fig-permutation-interval}
\end{figure}

\begin{theorem}\label{thm-permutations}
Every permutation $\pi$ on $n$ symbols has a simple extension with at most $\lceil(n+1)/2\rceil$ additional points.
\end{theorem}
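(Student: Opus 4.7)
The plan is to proceed by induction on $n$, mirroring the strategy used for tournaments. For the base cases, the permutations of size $1$ and $2$ can be handled directly: the singleton extends to $12$ with one added point, and both $12$ and $21$ extend to the simple permutations $2413$ and $3142$ with $\lceil 3/2\rceil = 2$ added points. For the inductive step on $\pi$ of size $n \ge 3$, I would invoke the substitution decomposition $\pi = \sigma[\pi_1,\ldots,\pi_m]$ where $\sigma$ is the simple quotient, and split into the non-degenerate case (where $\sigma$ is simple of size at least $4$) and the degenerate case (where $\sigma$ equals $12\cdots m$ or its reverse, so that $\pi$ is sum- or skew-decomposable).

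In the non-degenerate case, I would inductively simple-extend each non-singleton block $\pi_i$ by adding at most $\lceil(|\pi_i|+1)/2\rceil$ points placed inside the horizontal and vertical strip of the block's bounding rectangle. The simplicity of each extended block together with the simplicity of $\sigma$ forces any non-trivial interval in the extension either to live entirely in a single block (impossible by construction) or to project onto a non-trivial interval of $\sigma$, hence to coincide with all of $\pi$; placing the added points carefully (for instance making at least one an extremum) to avoid any such global interval yields simplicity. The count is kept under control because singleton blocks contribute one original point each but induce no added points, providing slack against the target $\lceil(n+1)/2\rceil$.

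The main obstacle, and the case that pins down the tight bound, is the degenerate case, exemplified by the identity $12\cdots n$: here every range of positions $[i,j]$ yields an interval of $\pi$, so we must break quadratically many intervals using only linearly many points. The remedy I have in mind is to add points in a zigzag pattern, placing them alternately high and low between consecutive columns of $\pi$ so that every interval's bounding box is pierced on its top or bottom side; such a pattern should use exactly $\lceil(n+1)/2\rceil$ added points. For a general sum-decomposable $\pi$ with some non-singleton blocks, I would combine the inter-block zigzag with inductive extensions of each block, placing the zigzag points in the column and row gaps between blocks so they serve double duty. To splice these constructions cleanly I expect to need a strengthened inductive hypothesis controlling \emph{where} an added point may be placed — for example, requiring that one added point may be prescribed to lie above, or below, all existing entries in a specified column gap — since this flexibility is what makes the degenerate gluing work and allows the ceiling arithmetic to close up without accumulating a loss of one point at each level of the recursion.
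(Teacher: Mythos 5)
Your overall strategy (induction via the substitution decomposition, with the identity permutation as the extremal degenerate case handled by a zigzag of alternately high and low points) is the same as the paper's, and your treatment of $12\cdots n$ is essentially right. But the non-degenerate case as you describe it has a genuine gap in the counting. If you extend each non-singleton block $\pi_i$ independently by up to $\lceil(|\pi_i|+1)/2\rceil$ points, the total over $t$ non-singleton blocks is at least $\sum_i(|\pi_i|+1)/2$, which overshoots $\lceil(n+1)/2\rceil$ by roughly $t/2$; the ``slack from singleton blocks'' you invoke vanishes when there are none. Concretely, for $\pi=2413[12,12,12,12]$ with $n=8$, your scheme adds $4\cdot 2=8$ points against a target of $5$. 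The fix --- which is exactly the ``double duty'' idea you reserve for the degenerate gluing --- must be applied in the non-degenerate case too: strengthen the inductive hypothesis so that each block's extension carries a designated \emph{entry point} (a new leftmost, non-extremal point) and \emph{exit point} (a new maximum or minimum, neither leftmost nor rightmost), and then merge the exit point of each non-singleton block with the entry point of the next. This saves $t-1$ points and brings the total down to $1+n/2$, which rounds to the claimed bound.

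There is a second, related problem: placing all added points for a block ``inside the horizontal and vertical strip of the block's bounding rectangle'' leaves the extended block occupying a contiguous range of positions and values, so the \emph{entire} extended block remains a proper interval of the global permutation. Simplicity of the block's extension only excludes proper intervals strictly inside it, so your claim that an interval living in a single block is ``impossible by construction'' fails, and making one added point a global extremum repairs at most one block. The merged linking points cure this as well: each one is vertically outside one block while lying inside the position/value range of the adjacent block, so it pierces the bounding box of every extended block. You have the right ingredient in your proposed strengthened hypothesis about prescribing where added points go; it just needs to be formulated as entry/exit points, deployed in every case, and accompanied by the bookkeeping that every proper interval of a block's extension contains its exit point and that at least one of the two (``up'' and ``down'') global extensions is simple.
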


\begin{proof}
We proceed by induction on $n\geq 2$, claiming that for each permutation $\pi$ of length $n$ we may construct two extensions $\pi^{\uparrow}$ and $\pi^{\downarrow}$ both containing at most
$\lceil \frac{n+1}{2}\rceil$ new points, satisfying:
\begin{itemize}
\item Viewed as extensions, each $\pi^{a}$ ($a\in\{\uparrow,\downarrow\}$) has a new leftmost point
 which is neither a new maximum nor a new minimum, called the \emph{entry point} and denoted $\In(\pi^a)$.
\item Each $\pi^a$ ($a\in\{\uparrow,\downarrow\}$) has a new \emph{exit point} $\Out(\pi^a)$;
for $\pi^{\uparrow}$ this is a new maximum while for $\pi^{\downarrow}$ this is a new minimum, and in both cases it is neither a leftmost nor a rightmost point.
\item Any proper interval of $\pi^{a}$ contains the exit point $\Out(\pi^a)$, but does not contain the rightmost point of $\pi$.
\item At least one of $\pi^{\uparrow}$ and $\pi^{\downarrow}$ is simple.
\end{itemize}
Let us call the permutation obtained from $\pi^a$ by removing the entry and exit points the \emph{core} of $\pi^a$ and denote it by $\Core(\pi^a)$. Note that $\Core(\pi^a)$ still contains the original copy of $\pi$.

In the base case $n=2$, either $\pi=12$ or $\pi=21$. When $\pi=12$, $\pi^{\uparrow}=2413$ is simple, and the only non-singleton intervals of $\pi^{\downarrow}=3124$ are $12$ and $312$, both of which contain the exit point. The case $\pi=21$ is dealt with by symmetry.

Now suppose $n\geq 3$. Write $\pi = \sigma [\pi_1,\pi_2,\ldots,\pi_m]$ where the simple quotient $\sigma$ of $\pi$ is of length $m\geq 2$, and $\pi_1,\pi_2,\ldots, \pi_m$ are permutations. First suppose $m>2$ so that the substitution decomposition is unique. If $\pi_i = 1$ for all $i$, then $\pi = \sigma$ is already simple. We construct $\pi^{\uparrow}$ and $\pi^{\downarrow}$ by adding precisely two points. The first is a new leftmost point, which may be positioned vertically anywhere except as a new maximum, minimum, or adjacent to the leftmost entry of $\pi$. The new maximum or minimum is inserted similarly, and it is easily checked that the resulting permutation is simple.

Now suppose that at least one $\pi_i$ contains at least two points.
The permutations $\pi^\uparrow$ and $\pi^\downarrow$ will be constructed by extending
each non-trivial block $\pi_i$ of $\pi$ to one of $\pi_i^\uparrow$ or $\pi_i^\downarrow$ and then `repositioning' the entry and exit points so that they `link' successive blocks.
The choice between $\pi_i^\uparrow$ and $\pi_i^\downarrow$ is made so as to enable this linking.
More formally, to define $\pi^a$ ($a\in\{\uparrow,\downarrow\}$), first write $\sigma=s_1\ldots s_m$,
and suppose that the non-trivial blocks of $\pi$ are, from left to right, $\pi_{i_1},\ldots,\pi_{i_t}$.
Next, for each $i=1,\ldots,m$ define
\[
\tau_i=\left\{ \begin{array}{ll}
\Core(\pi_i^\uparrow) & \mbox{if } i=i_r,\ 1\leq r\leq t-1, \mbox{ and } s_{i_r}<s_{i_{r+1}}\\
\Core(\pi_i^\downarrow) & \mbox{if } i=i_r,\ 1\leq r\leq t-1, \mbox{ and } s_{i_r}>s_{i_{r+1}}\\
\Core(\pi_i^a) & \mbox{if } i=i_t\\
1 & \mbox{otherwise,}
\end{array} \right.
\]
and form the permutation $\tau = \sigma[\tau_1,\ldots,\tau_m]$.
For each $r=1,\ldots ,t-1$ insert a linking point $\ell_r$, the position of which is uniquely determined by the requirement that $\ell_r$ is the exit point for $\tau_{i_r}$ and the entry point for $\tau_{i_{r+1}}$.
Finally, complete the construction of $\pi^a$ by defining $\ell_0=\In(\pi^a)$ to be the leftmost point, which at the same time serves as the entry point to $\tau_{i_1}$, while $\ell_t=\Out(\pi^a)$ is a new minimum or a new maximum (depending on $a$) which is at the same time the exit point for $\tau_{i_t}$.
This construction is illustrated in Figure~\ref{fig-permutation-extension}.

\begin{figure}
\begin{center}
\begin{tabular}{ccc}
\psset{xunit=0.013in, yunit=0.013in} \psset{linewidth=0.005in}
\begin{pspicture}(-10,-10)(160,160)
\psaxes[dy=10,dx=10,tickstyle=bottom,showorigin=false,labels=none](-10,-10)(150,150)
{\psset{fillstyle=none,linecolor=black,cornersize=absolute,linearc=3pt}
  \psframe[linewidth=0.005in,linestyle=solid](6,26)(34,54)
  \psframe[linewidth=0.005in,linestyle=solid](46,66)(84,104)
  \psframe[linewidth=0.005in,linestyle=solid](86,6)(104,24)
  \psframe[linewidth=0.005in,linestyle=solid](116,106)(144,134)}
\pscircle*(40,140){0.03in}
\pscircle*(110,60){0.03in}
{\small
\rput[c](20,40){$\pi_1$}
\rput[c](65,85){$\pi_3$}
\rput[c](95,15){$\pi_4$}
\rput[c](130,120){$\pi_6$}}
\end{pspicture}
&\rule{10pt}{0pt}&
\psset{xunit=0.013in, yunit=0.013in}
\psset{linewidth=0.005in}
\begin{pspicture}(-30,-10)(150,150)
\psaxes[dy=10,dx=10,tickstyle=bottom,showorigin=false,labels=none](-10,-10)(150,150)
\psline(-5,45)(25,45)(25,95)(65,95)(65,15)(95,15)(95,115)(125,115)(125,145)
{\psset{fillstyle=solid,fillcolor=white}
  \psframe[linewidth=0.005in,linestyle=solid](3,23)(37,57)
  \psframe[linewidth=0.005in,linestyle=solid](43,63)(87,107)
  \psframe[linewidth=0.005in,linestyle=solid](83,3)(107,27)
  \psframe[linewidth=0.005in,linestyle=solid](113,103)(147,137)
\psset{linecolor=gray,cornersize=absolute,linearc=3pt}
  \psframe[linewidth=0.005in,linestyle=solid](6,26)(34,54)
  \psframe[linewidth=0.005in,linestyle=solid](46,66)(84,104)
  \psframe[linewidth=0.005in,linestyle=solid](86,6)(104,24)
  \psframe[linewidth=0.005in,linestyle=solid](116,106)(144,134)}
{\psset{fillstyle=solid,fillcolor=white}
\pscircle(-5,45){0.03in}
\pscircle(25,95){0.03in}
\pscircle(65,15){0.03in}
\pscircle(95,115){0.03in}
\pscircle(125,145){0.03in}}
\pscircle*(40,140){0.03in}
\pscircle*(110,60){0.03in}
{\small
\rput[c](20,40){$\tau_1^{\uparrow}$}
\rput[c](65,85){$\tau_3^{\downarrow}$}
\rput[c](95,15){$\tau_4^{\uparrow}$}
\rput[c](130,120){$\tau_6^{\uparrow}$}
\rput[r](-12,45){$\In(\pi^\uparrow)$}
\rput[r](22,102){$\ell_1$}
\rput[r](62,11){$\ell_2$}
\rput[r](92,122){$\ell_3$}
\rput[c](125,155){$\Out(\pi^\uparrow)$}
}
\end{pspicture}
\end{tabular}
\end{center}
\caption{On the left, the substitution decomposition of $\pi=264135[\pi_1,1,\pi_3,\pi_4,1,\pi_6]$. On the right, the extension $\pi^\uparrow$ formed from the blocks $\tau_i^a$ connected by linking points $\ell_1,\ell_2$ and $\ell_3$.}
\label{fig-permutation-extension}
\end{figure}
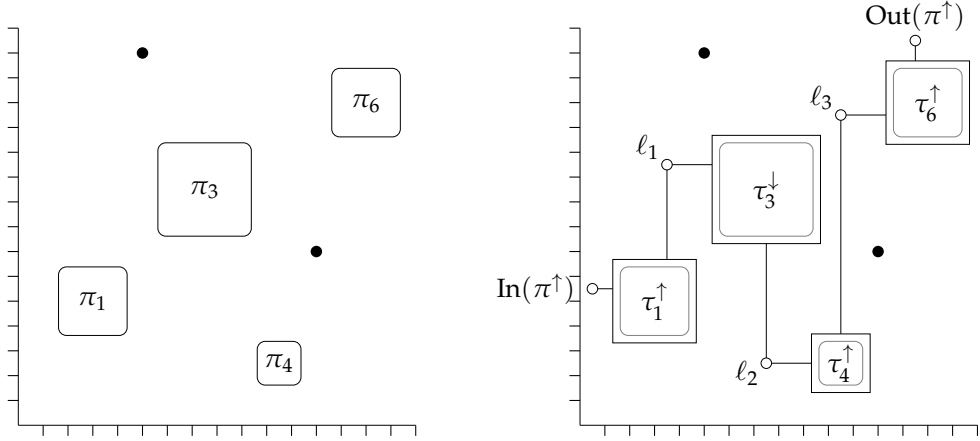

\begin{claim}
If $\pi^\uparrow$ is not simple then the rightmost non-singleton block $\pi_{i_t}$ is the maximal block by value,
and every non-trivial proper interval of $\pi^\uparrow$ contains $\Out(\pi^\uparrow)$.
\end{claim}

\begin{proof}
Let $I$ be a non-singleton interval of $\pi^\uparrow$, and let $u,v\in I$ be two distinct points.
We consider several cases, depending on the position of $u$ and $v$ in $\pi^\uparrow$.

\textit{Case 1:} $u\in \tau_j$, $v\in \tau_k$, $j\neq k$.
The simplicity of $\sigma$ implies that $I$ contains all $\tau_i$ ($1\leq i\leq m$).
Each of the remaining points $\ell_0,\ell_1,\ldots,\ell_t$ overlaps at least one
$\tau_i$, and so these points must belong to $I$ too. We conclude that $I=\pi^\uparrow$.
\vspc

\textit{Case 2:} $u,v\in\tau_{i_r}$ for some $r\in\{1,\ldots,t-1\}$.
The set $I\cap(\tau_{i_r}\cup\ell_{r-1}\cup\ell_r)$ is an interval of the permutation $\pi_{i_r}^b = \tau_{i_r}\cup\ell_{r-1}\cup\ell_r$ ($b\in\{\uparrow,\downarrow\}$), and so by induction it contains the exit point $\ell_r$.
But $\ell_r$ overlaps horizontally with $\tau_{i_{r+1}}$, and so $I$ must contain a point from $\tau_{i_{r+1}}$.
Thus we have reduced this case to Case 1.
\vspc

\textit{Case 3:}  $u,v\in\tau_{i_t}$. As in Case 2 we see that $I$ must contain the corresponding exit point,
which this time is $\Out(\pi^\uparrow)$. Now, if $\pi_{i_t}$ is the maximal block by value there is nothing further to prove, while if $\pi_{i_t}$ is not maximal we see that $I$ must contain a point from some other block, and reduce again to Case 1.
\vspc

\textit{Case 4:}
$u\in \{\ell_0,\ell_1,\ldots,\ell_{t}\}$, $v\in \tau_j$.
From the construction it follows that $u$ and $v$ are separated either horizontally or vertically by at least one other point belonging to some $\tau_k$. Thus this case reduces to one of the first three cases.
\vspc

\textit{Case 5:}
$u,v \in \{\ell_0,\ell_1,\ldots,\ell_{t}\}$.
Again, we can see that $I$ must contain at least one element from some $\tau_j$, reducing to Case 4.
\end{proof}

Returning to the proof of the theorem, we note that the analogous assertion holds for $\pi^\downarrow$:
If $\pi^\downarrow$ is not simple then the rightmost non-singleton block $\pi_{i_t}$ is the minimal block by value,
 and every non-trivial proper interval of $\pi^\downarrow$ contains $\Out(\pi^\downarrow)$. Since $\sigma$ is simple and of length at least $4$, neither the maximal nor the minimal blocks by value can be the rightmost block of $\pi$, and so the rightmost point of $\pi$ cannot appear in any proper interval of $\pi^a$. Moreover, as $\pi_{i_t}$ cannot be at the same time maximal and minimal, we conclude that at least one of $\pi^\uparrow$
or $\pi^\downarrow$ is simple.

To complete the proof in the non-degenerate case, let us estimate the number of points we have added to $\pi$.
To each non-trivial block $\pi_{i_r}$ ($1\leq r\leq t$) we have added at most $\lceil (|\pi_{i_r}|+1)/2\rceil$ points.
Moreover, we merged $t-1$ pairs of exit/entry points. Thus the total number of additional points does not exceed
\[
\sum_{r=1}^t \left\lceil \frac{|\pi_{i_r}|+1}{2}\right\rceil - (t-1) \leq \sum_{r=1}^t \frac{|\pi_{i_r}|+2}{2} - (t-1)=
1+\sum_{r=1}^t \frac{|\pi_{i_r}|}{2}\leq 1 + \frac{n}{2}.
\]
Since the number of points is a whole number, we conclude that it does not exceed \mbox{$\lceil (n+1)/2\rceil$}, as required.

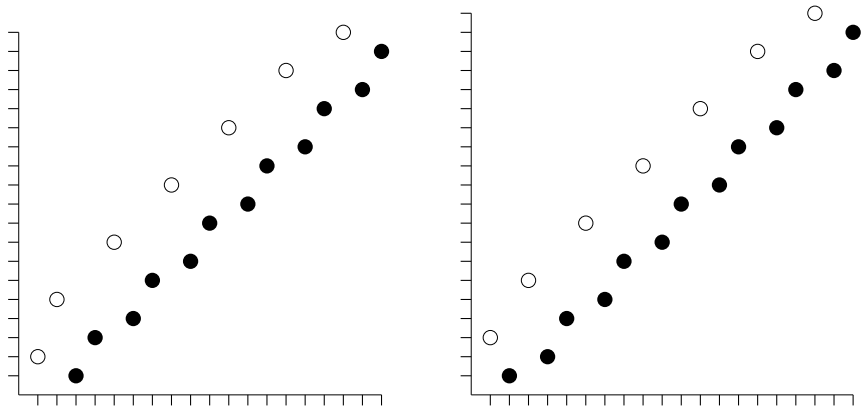
\begin{figure}
\begin{center}
\begin{tabular}{ccc}
\psset{xunit=0.01in, yunit=0.01in}
\psset{linewidth=0.005in}
\begin{pspicture}(0,0)(190,190)
\psaxes[dy=10,Dy=1,dx=10,Dx=1,tickstyle=bottom,showorigin=false,labels=none](0,0)(190,190)
\pscircle(10,20){0.04in}
\pscircle(20,50){0.04in}
\pscircle*(30,10){0.04in}
\pscircle*(40,30){0.04in}
\pscircle(50,80){0.04in}
\pscircle*(60,40){0.04in}
\pscircle*(70,60){0.04in}
\pscircle(80,110){0.04in}
\pscircle*(90,70){0.04in}
\pscircle*(100,90){0.04in}
\pscircle(110,140){0.04in}
\pscircle*(120,100){0.04in}
\pscircle*(130,120){0.04in}
\pscircle(140,170){0.04in}
\pscircle*(150,130){0.04in}
\pscircle*(160,150){0.04in}
\pscircle(170,190){0.04in}
\pscircle*(180,160){0.04in}
\pscircle*(190,180){0.04in}
\end{pspicture}
&\rule{10pt}{0pt}&
\psset{xunit=0.01in, yunit=0.01in}
\psset{linewidth=0.005in}
\begin{pspicture}(0,0)(200,200)
\psaxes[dy=10, Dy=1, dx=10, Dx=1, tickstyle=bottom, showorigin=false, labels=none](0,0)(200,200)
\pscircle(10,30){0.04in}
\pscircle*(20,10){0.04in}
\pscircle(30,60){0.04in}
\pscircle*(40,20){0.04in}
\pscircle*(50,40){0.04in}
\pscircle(60,90){0.04in}
\pscircle*(70,50){0.04in}
\pscircle*(80,70){0.04in}
\pscircle(90,120){0.04in}
\pscircle*(100,80){0.04in}
\pscircle*(110,100){0.04in}
\pscircle(120,150){0.04in}
\pscircle*(130,110){0.04in}
\pscircle*(140,130){0.04in}
\pscircle(150,180){0.04in}
\pscircle*(160,140){0.04in}
\pscircle*(170,160){0.04in}
\pscircle(180,200){0.04in}
\pscircle*(190,170){0.04in}
\pscircle*(200,190){0.04in}

\end{pspicture}
\end{tabular}
\end{center}
\caption{Simple extensions of $12\cdots n$, for $n=12$ (left) and $n=13$ (right).}
\label{fig-increasing-simples}
\end{figure}

In the case where $m=2$ we may assume without loss that $\sigma=12$, and  write $\pi=12[\pi_1,\pi_2]$,
where $\pi_1$ and $\pi_2$ may be chosen possibly in a number of different ways. Fix $\pi_1$ so that if possible it does not end with its largest element. If this is possible, and if $\pi_2$ is not a singleton, we may construct $\pi^a$ ($a\in\{\uparrow,\downarrow\}$) by extending $\pi_1$ to $\pi_1^{\uparrow}$ and $\pi_2$ to $\pi_2^a$, and identifying the exit point of the first to the entry point of the second. If $\pi_2$ is a singleton, then the exit point of $\pi_1^{\uparrow}$ may be placed above $\pi_2$ to form $\pi^\uparrow$ without producing a proper interval between the top of $\pi_1$ and $\pi_2$: the intersection of any such interval with $\pi_1^\uparrow$ would have to form a proper interval in $\pi_1^\uparrow$ containing the rightmost point of $\pi_1$, which is impossible by the inductive hypothesis. We form $\pi^{\downarrow}$ simply by taking $12[\pi_1^{\downarrow},\pi_2]$, with the exit point of $\pi_1$ acting as the exit point of $\pi$. Note that $\pi^\downarrow$ is not simple since $\pi_1^\downarrow$ is an interval, but it contains $\Out(\pi^\downarrow)$ and not the single point of $\pi_2$ and so satisfies the inductive hypothesis.

If it is not possible to pick $\pi_1$ so that it does not end with its largest element, then we first attempt to pick $\pi_2$ so that it does not start with its smallest element. If this is possible, and if $\pi_1$ is not a singleton, an analogous argument to that given above applies. On the other hand, if $\pi_1$ is a singleton, place all additional points of $\pi_2^a$ that lie to the left of every point in $\pi_2$ to the left of the singleton $\pi_1$. (This happens, for example, if $\pi=132$, and we form $\pi^\uparrow=35142$ and $\pi^\downarrow=41253$.) If it is not possible to pick $\pi_2$ so that it does not start with its smallest element, then $\pi=12\cdots n$. If $n>3$, we set $\pi_1=12\cdots n-2$ so that $\pi_2$ forms the pattern $12$, whence there is a two-point extension $\pi_2^a$, $a \in\{\uparrow,\downarrow\}$, in which both the entry and exit points occur to the left of $\Core(\pi_2)$. This permits us to connect the exit point of the extension $\pi_1^\uparrow$ to the entry point of $\pi_2^a$, and the exit point of $\pi_2^a$ both acts as the exit point of $\pi^a$ and ensures that there are no intervals between $\pi_1$ and $\pi_2$. Finally, when $\pi=123$, we set $\pi^\uparrow=31524$ and $\pi^\downarrow=42135$. In each case, checking that the so constructed extensions $\pi^\uparrow$ and $\pi^\downarrow$ satisfy the required conditions is similar to (and easier than) the non-degenerate case.
\end{proof}

See Figure~\ref{fig-increasing-simples} for examples of extensions of monotone permutations. Note that in this case the bound is tight: every adjacent pair $i,i+1$ must be ``separated'' either horizontally or vertically by one of the additional points, and the points $\pi(1)=1$ and $\pi(n)=n$ of $\pi$ must not lie in the ``corners'' of the simple extension --- a total of $n+1$ gaps to be filled, and each additional point can be used to fill at most two of these gaps (one horizontally, one vertically).


\section{Posets}\label{sec-posets}

Although posets are very naturally described as relational structures, we have delayed considering their simple extensions until now,
as with posets we encounter a ``mix'' of the results related to the degenerate decompositions corresponding to permutations and graphs.
For permutations recall that the degenerate cases correspond to the increasing and decreasing permutations, which (viewing them as relational structures) occur when the two linear orders are equal or are the opposites of each other.
For graphs, the non-uniqueness comes in the form of complete and independent graphs. Posets can be decomposed non-uniquely either through
total comparability (linear orders) or total incomparability (antichains), and the simple extension in each case is significantly different.

We begin with a result that holds for all posets; we will discuss special cases where fewer additional elements are required afterwards. Our approach takes much the same form as the permutation case, inductively identifying entry and exit points from the simple extensions of the intervals in the substitution decomposition, and this gives rise to the same bound. Because of this similarity, we will omit some of the case by case details in the proof.

\begin{theorem}A poset $(P,<)$ on $n$ elements has a simple extension with at most \mbox{$\lceil(n+1)/2\rceil$} additional elements.
\end{theorem}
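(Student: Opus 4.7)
The plan is to mirror the inductive strategy used for permutations in Theorem~\ref{thm-permutations}. We induct on $n$, constructing two extensions $P^\uparrow$ and $P^\downarrow$, each with at most $\lceil(n+1)/2\rceil$ additional elements. Each $P^a$ (for $a\in\{\uparrow,\downarrow\}$) will carry a distinguished entry point $\In(P^a)$ and exit point $\Out(P^a)$, the two compatible so that the exit point of one extension can be identified with the entry point of another during the inductive gluing. In $P^\uparrow$ the exit point is a new global maximum and in $P^\downarrow$ a new global minimum, in each case chosen so as not to be strictly above (respectively below) every element of the original $P$. The invariants to maintain are that every proper non-singleton interval of $P^a$ contains $\Out(P^a)$ while avoiding some designated element of $P$, and that at least one of $P^\uparrow$, $P^\downarrow$ is itself simple. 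As in the permutation case, we write $\Core(P^a)$ for the extension $P^a$ with its entry and exit points removed.

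For the non-degenerate inductive step, write $P = Q[P_s : s \in Q]$ with $|Q| > 2$, so that both $Q$ and the $P_s$ are uniquely determined. List the non-trivial blocks in some fixed linear extension of $Q$ as $P_{i_1}, \ldots, P_{i_t}$, and for each $r < t$ inductively construct $P_{i_r}^\uparrow$ or $P_{i_r}^\downarrow$, the choice between them dictated by the relative position of $s_{i_r}$ and $s_{i_{r+1}}$ in $Q$ so that the exit point of one block provides a valid entry point for the next. We then identify the exit point of $\Core(P_{i_r}^a)$ with the entry point of $\Core(P_{i_{r+1}}^b)$ to form a single linking element $\ell_r$, prepend $\ell_0 = \In(P^a)$, and append $\ell_t = \Out(P^a)$. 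A case analysis directly analogous to Cases 1--5 in the permutation proof---with horizontal/vertical separations of the plot replaced by above/below comparabilities in the poset---shows that every non-singleton proper interval of $P^a$ must engulf $\Out(P^a)$. At least one of $P^\uparrow$, $P^\downarrow$ is simple, because a simple poset on at least three elements can have neither a global minimum nor a global maximum (removing such a point would leave a proper interval), so the image $s_{i_t}$ of the last non-trivial block cannot simultaneously obstruct the simplicity of both extensions. The point count is identical to the permutation calculation and gives the bound $\lceil(n+1)/2\rceil$.

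The main obstacle is the degenerate case $|Q|=2$, because posets admit \emph{two} kinds of degenerate decomposition: $Q$ could be a $2$-chain, in which case $P$ is an ordinal sum $P_1 \oplus P_2$, or $Q$ could be a $2$-antichain, in which case $P$ is a disjoint union $P_1 \sqcup P_2$. In each case the partition of $P$ into $P_1, P_2$ is not unique, and the plan---exactly as in the permutation proof---is to pick $P_1$ greedily so that it does not end at a top element (respectively, start at a bottom element), which allows the inductive extensions of $P_1$ and $P_2$ to be glued along matching entry/exit points. The genuinely hard sub-cases are when no such choice is available, namely when $P$ itself is a chain $p_1 < \cdots < p_n$ or an antichain on $n$ elements. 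For a chain we peel off the last two elements, treat the $2$-chain by an explicit construction whose entry and exit points break the interval between $P_1$ and $P_2$, and verify the base cases $n \leq 3$ by hand; the antichain sub-case is dual. As with monotone permutations, the bound turns out to be tight here, with chains (and antichains) requiring the full $\lceil(n+1)/2\rceil$ additional elements.
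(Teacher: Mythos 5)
Your overall architecture --- induction through the substitution decomposition, inductively extending the non-trivial blocks and fusing the distinguished point of one block's extension with that of the next, with the degenerate $2$-chain and $2$-antichain quotients (and ultimately chains and antichains themselves) handled separately --- is the same as the paper's. But there is a genuine gap in carrying the permutation scheme over with only \emph{two} extensions $P^\uparrow,P^\downarrow$ and an ``entry point'' whose relationship to the block you never pin down. In a permutation the entry point has a uniform description (a new leftmost point that is neither a maximum nor a minimum) because there are two independent linear orders: its role in the horizontal order is fixed once and for all, and only the exit point's vertical type needs to vary. A poset has a single relation, so each distinguished point must itself be declared a new maximum or a new minimum of the block it enters or leaves, and the two declarations are forced by \emph{independent} constraints: the type of the incoming linking point $\ell_{s-1}$ is dictated by how block $s$ sits relative to the previous non-trivial block in the quotient, while the type of the outgoing point $\ell_s$ is dictated by its relation to the next one (both to keep the glued relation transitive and to make $\ell_s$ genuinely separate elements of the adjacent block). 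Since these two conditions vary independently, all four combinations (max/max, min/min, max/min, min/max) occur, and this is exactly why the paper constructs \emph{four} extensions $P^\uparrow$, $P^\downarrow$, $P^\updown$, $P^\downup$, each with two distinguished elements $\Ext_1(P^a)$, $\Ext_2(P^a)$ of prescribed min/max type, and selects among them block by block according to the comparabilities $s-1<_S s$ and $s<_S s+1$. With only your two types the gluing fails already for a middle block that must receive a minimum from its predecessor but emit a maximum to its successor; and if instead you try to make the entry point incomparable to its block, it no longer does the interval-breaking work the induction requires. Your appeal to ``Cases 1--5 directly analogous'' hides the same issue: ``horizontal separation'' has no poset counterpart, so the case analysis cannot be transported verbatim.

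Two smaller points. First, your closing claim that antichains require the full $\lceil(n+1)/2\rceil$ additional elements is false: an antichain is the poset analogue of an empty graph, and via comparability graphs it has a simple extension with only $\lceil\log_2(n+1)\rceil$ new elements (the paper makes this observation after the theorem); treating the antichain case as ``dual'' to the chain case is therefore misleading, though using the decreasing-permutation construction there does still give a valid \emph{upper} bound for the theorem. Second, your observation that a simple poset on at least three elements has no unique maximal or minimal element is correct and is used in the paper in the same way, but note it only rules out one of $P^\uparrow$ and $P^\downarrow$ failing for the \emph{last} block; the full argument that at least one of the four extensions is simple still requires the quotient's simplicity to relate the last non-trivial block to some other block.
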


\begin{proof}
Extending a poset of size 1 is trivial, so we proceed by induction on $n$ assuming $n\geq 2$. Our claim is that we may form four extensions
$P^{\uparrow}$, $P^{\downarrow}$, $P^{\updown}$ and $P^{\downup}$ of a poset $(P,<)$, satisfying:

\begin{itemize}
\item
Each extension $P^{a}$ ($a\in\{ \uparrow,\downarrow,\updown,\downup\}$) has at most $\lceil (n+1)/2\rceil$ new elements.
\item
Among these new points lie two \emph{distinguished elements}, $\Ext_1(P^a)$ and $\Ext_2(P^a)$. For $P^{\downarrow}$ these are both new minima, for $P^{\uparrow}$ new maxima, for $P^{\updown}$, the point $\Ext_1(P^\updown)$ is a maximum and $\Ext_2(P^\downup)$ is a minimum, while for $P^{\downup}$, the point $\Ext_1(P^\downup)$ is a minimum and $\Ext_2(P^\downup)$ is a maximum. Note that $P^\updown$ and $P^\downup$ need not be different extensions --- it is only the labelling of the distinguished elements that matters. The remaining points
of $P^a$ will be called the \emph{core}, denoted $\Core(P^a)$.
\item Every proper interval of $P^{a}$ ($a\in\{\uparrow,\downarrow,\updown,\downup\}$) contains $\Ext_2(P^a)$. Additionally, in $P^\uparrow$ and $P^\updown$ there is a minimal element that is not contained in any proper interval, and in $P^\downarrow$ and $P^\downup$ there is a maximal element not contained in any proper interval.
\item At least one $P^{a}$ is simple.
\end{itemize}

The base case is $n=2$, when the poset is either linear or an antichain. The extensions $P^{a}$ may be seen in Figure~\ref{fig-two-point-posets}, and it is straightforward to check that these satisfy the inductive hypothesis.  Now suppose that $n> 2$ and decompose the poset as $P=S[A_s:s\in S]$. Assuming first that $|S|>2$, we proceed in essentially the same way as in the permutation case.

\begin{figure}
\begin{center}
\begin{tabular}{ccccccc}
\psset{xunit=0.014in, yunit=0.014in}
\psset{linewidth=0.005in}
\begin{pspicture}(0,0)(40,60)
\Cnode*[fillstyle=solid,radius=0.04in](10,20){a}
\Cnode*[fillstyle=solid,radius=0.04in](10,40){b}
\Cnode[radius=0.04in](30,40){x}
\Cnode[radius=0.04in](30,60){y}
\ncline{-}{a}{b}
\ncline{-}{x}{a}
\ncline{-}{y}{b}
{\small
\rput[l](34,36){$1$}
\rput[l](34,64){$2$}}
\end{pspicture}
&\hspace{5pt}&
\psset{xunit=0.014in, yunit=0.014in}
\psset{linewidth=0.005in}
\begin{pspicture}(0,0)(40,60)
\Cnode*[fillstyle=solid,radius=0.04in](10,20){a}
\Cnode*[fillstyle=solid,radius=0.04in](10,40){b}
\Cnode[radius=0.04in](30,0){x}
\Cnode[radius=0.04in](30,20){y}
\ncline{-}{a}{b}
\ncline{-}{x}{a}
\ncline{-}{y}{b}
{\small
\rput[l](34,-4){$2$}
\rput[l](34,24){$1$}}
\end{pspicture}
&\hspace{5pt}&
\psset{xunit=0.014in, yunit=0.014in}
\psset{linewidth=0.005in}
\begin{pspicture}(0,0)(40,60)
\Cnode*[fillstyle=solid,radius=0.04in](10,20){a}
\Cnode*[fillstyle=solid,radius=0.04in](10,40){b}
\Cnode[radius=0.04in](30,20){x}
\Cnode[radius=0.04in](30,40){y}
\ncline{-}{a}{b}
\ncline{-}{x}{y}
\ncline{-}{y}{a}
{\small
\rput[l](34,16){$2$}
\rput[l](34,44){$1$}}
\end{pspicture}
&\hspace{5pt}&
\psset{xunit=0.014in, yunit=0.014in}
\psset{linewidth=0.005in}
\begin{pspicture}(0,0)(40,60)
\Cnode*[fillstyle=solid,radius=0.04in](10,20){a}
\Cnode*[fillstyle=solid,radius=0.04in](10,40){b}
\Cnode[radius=0.04in](30,20){x}
\Cnode[radius=0.04in](30,40){y}
\ncline{-}{a}{b}
\ncline{-}{x}{y}
\ncline{-}{x}{b}
{\small
\rput[l](34,16){$1$}
\rput[l](34,44){$2$}}
\end{pspicture}
\\
\psset{xunit=0.014in, yunit=0.014in}
\psset{linewidth=0.005in}
\begin{pspicture}(0,0)(40,60)
\Cnode*[fillstyle=solid,radius=0.04in](10,30){a}
\Cnode*[fillstyle=solid,radius=0.04in](30,30){b}
\Cnode[radius=0.04in](20,50){x}
\Cnode[radius=0.04in](40,50){y}
\ncline{-}{a}{x}
\ncline{-}{x}{b}
\ncline{-}{y}{b}
{\small
\rput[l](44,54){$1$}
\rput[r](16,54){$2$}}
\end{pspicture}
&\hspace{5pt}&
\psset{xunit=0.014in, yunit=0.014in}
\psset{linewidth=0.005in}
\begin{pspicture}(0,0)(40,60)
\Cnode*[fillstyle=solid,radius=0.04in](10,30){a}
\Cnode*[fillstyle=solid,radius=0.04in](30,30){b}
\Cnode[radius=0.04in](20,10){x}
\Cnode[radius=0.04in](40,10){y}
\ncline{-}{a}{x}
\ncline{-}{x}{b}
\ncline{-}{y}{b}
{\small
\rput[l](44,6){$1$}
\rput[r](16,6){$2$}}
\end{pspicture}
&\hspace{5pt}&
\psset{xunit=0.014in, yunit=0.014in}
\psset{linewidth=0.005in}
\begin{pspicture}(0,0)(40,60)
\Cnode*[fillstyle=solid,radius=0.04in](10,30){a}
\Cnode*[fillstyle=solid,radius=0.04in](30,30){b}
\Cnode[radius=0.04in](20,10){x}
\Cnode[radius=0.04in](20,50){y}
\ncline{-}{a}{x}
\ncline{-}{x}{y}
{\small
\rput[l](24,6){$1$}
\rput[l](24,54){$2$}}
\end{pspicture}
&\hspace{5pt}&
\psset{xunit=0.014in, yunit=0.014in}
\psset{linewidth=0.005in}
\begin{pspicture}(0,0)(40,60)
\Cnode*[fillstyle=solid,radius=0.04in](10,30){a}
\Cnode*[fillstyle=solid,radius=0.04in](30,30){b}
\Cnode[radius=0.04in](20,10){x}
\Cnode[radius=0.04in](20,50){y}
\ncline{-}{x}{y}
\ncline{-}{y}{b}
{\small
\rput[r](16,6){$2$}
\rput[r](16,54){$1$}}
\end{pspicture}
\end{tabular}
\end{center}
\caption{The eight $2$-element extensions when $|P|=2$. The points labelled $1$ and $2$ correspond to the distinguished elements $\Ext_1(P^a)$ and $\Ext_2(P^a)$ respectively.}\label{fig-two-point-posets}
\end{figure}
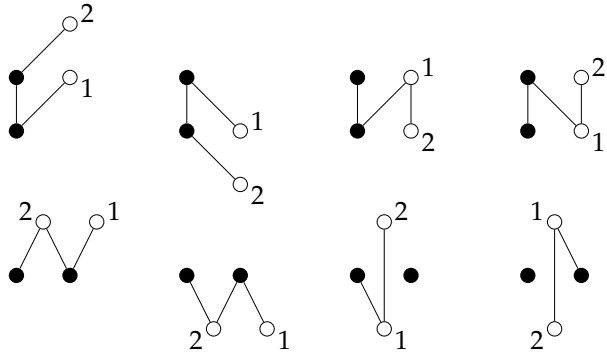

If every $A_s$ is a singleton, then $P$ is already simple, but for the purposes of the induction we must add two elements to form the four extensions $P^{\downarrow}$, $P^{\uparrow}$, $P^\updown$ and $P^\downup$. First observe that $P$ can contain neither a unique maximum nor a unique minimum element.  To form $P^\uparrow$ we find any two maxima of $P$, place $\Ext_1(P^\uparrow)$ above one of them, and place $\Ext_2(P^\uparrow)$ above both. We form $P^\downarrow$ analogously, and $P^{\updown}$ and $P^\downup$ are formed by adjoining two elements to any chosen element of $P$; see Figure~\ref{fig-simple-poset}. Note that all four extensions are necessarily simple.

\begin{figure}
\begin{center}
\begin{tabular}{ccccccc}
\psset{xunit=0.014in, yunit=0.014in}
\psset{linewidth=0.005in}
\begin{pspicture}(0,-10)(70,70)
\psccurve[linewidth=0.01in](65,15)(65,45)(15,45)(15,15)
\pnode(25,20){p1}
\pnode(35,20){p3}
\pnode(45,20){p2}
\pnode(55,20){p4}
\Cnode*[radius=0.04in](25,40){m1}
\Cnode*[radius=0.04in](55,40){m2}
\Cnode[radius=0.04in](25,60){q1}
\Cnode[radius=0.04in](55,60){q2}
\ncline[linestyle=dashed]{-}{p1}{m1}
\ncline[linestyle=dashed]{-}{p3}{m1}
\ncline[linestyle=dashed]{-}{p2}{m2}
\ncline[linestyle=dashed]{-}{p4}{m2}
\ncline{-}{m1}{q1}
\ncline{-}{m1}{q2}
\ncline{-}{m2}{q2}
{\small
\rput[r](21,64){$1$}
\rput[l](59,64){$2$}}
\end{pspicture}
&\hspace{10pt}&
\psset{xunit=0.014in, yunit=0.014in}
\psset{linewidth=0.005in}
\begin{pspicture}(0,-10)(70,70)
\psccurve[linewidth=0.01in](65,15)(65,45)(15,45)(15,15)
\pnode(25,40){p1}
\pnode(35,40){p3}
\pnode(45,40){p2}
\pnode(55,40){p4}
\Cnode*[radius=0.04in](25,20){m1}
\Cnode*[radius=0.04in](55,20){m2}
\Cnode[radius=0.04in](25,0){q1}
\Cnode[radius=0.04in](55,0){q2}
\ncline[linestyle=dashed]{-}{p1}{m1}
\ncline[linestyle=dashed]{-}{p3}{m1}
\ncline[linestyle=dashed]{-}{p2}{m2}
\ncline[linestyle=dashed]{-}{p4}{m2}
\ncline{-}{m1}{q1}
\ncline{-}{m1}{q2}
\ncline{-}{m2}{q2}
{\small
\rput[r](21,-4){$1$}
\rput[l](59,-4){$2$}}
\end{pspicture}
&\hspace{10pt}&
\psset{xunit=0.014in, yunit=0.014in}
\psset{linewidth=0.005in}
\begin{pspicture}(0,-10)(90,50)
\psccurve[linewidth=0.01in](65,15)(65,45)(15,45)(15,15)
\pnode(45,15){p1}
\pnode(45,45){p2}
\Cnode*[radius=0.04in](60,30){p}
\Cnode[radius=0.04in](80,50){q1}
\Cnode[radius=0.04in](80,30){q2}
\ncline[linestyle=dashed]{-}{p1}{p}
\ncline[linestyle=dashed]{-}{p2}{p}
\ncline{-}{p}{q1}
\ncline{-}{q1}{q2}
{\small
\rput[l](84,54){$1$}
\rput[l](84,26){$2$}}
\end{pspicture}
&\hspace{10pt}&
\psset{xunit=0.014in, yunit=0.014in}
\psset{linewidth=0.005in}
\begin{pspicture}(0,-10)(90,50)
\psccurve[linewidth=0.01in](65,15)(65,45)(15,45)(15,15)
\pnode(45,15){p1}
\pnode(45,45){p2}
\Cnode*[radius=0.04in](60,30){p}
\Cnode[radius=0.04in](80,10){q1}
\Cnode[radius=0.04in](80,30){q2}
\ncline[linestyle=dashed]{-}{p1}{p}
\ncline[linestyle=dashed]{-}{p2}{p}
\ncline{-}{p}{q1}
\ncline{-}{q1}{q2}
{\small
\rput[l](84,6){$1$}
\rput[l](84,34){$2$}}
\end{pspicture}\\
$P^\uparrow$&&$P^\downarrow$&&$P^\updown$&&$P^\downup$
\end{tabular}
\end{center}
\caption{The four $2$-element simple extension of an arbitrary simple poset. The points labelled $1$ and $2$ correspond to the distinguished elements $\Ext_1(P^a)$ and $\Ext_2(P^a)$, respectively.}\label{fig-simple-poset}
\end{figure}
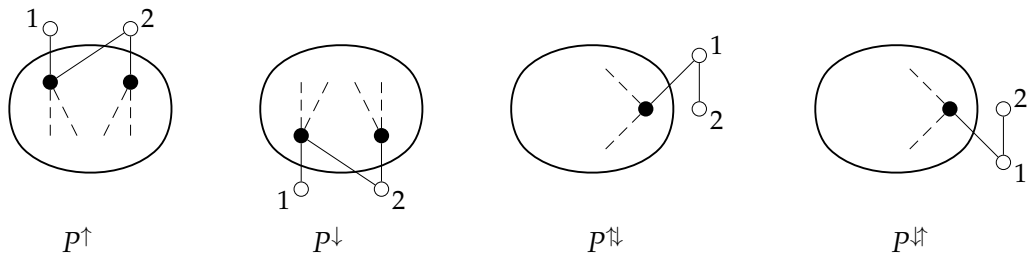

When at least one $A_s$ has more than one element, induction may be used on each such interval to form the extensions $A_s^{a_s}$
($a_s\in\{\uparrow,\downarrow,\updown,\downup\}$), which we utilise to construct $P^a$.  The basic idea is to consider the non-trivial blocks in turn, and for each of them determine which extension $A_s^{a_s}$ to use so that $A_s^{a_s}$ can share one distinguished point with the previous block, and one with the subsequent block.  The first and the last blocks will contribute one distinguished point each to become the distinguished points of $P^a$.  To be more precise, assume (without loss of generality) that the groundset of $S$ is $\{1,\dots,|S|\}$, and that the first $t$ of these elements correspond to the non-trivial blocks of $P$.  Define elements $a_s\in\{\uparrow,\downarrow,\updown,\downup\}$, $1\le s\le t$, as follows:
$$
a_s
=
\left\{\begin{array}{ll}
\left\{ \begin{array}{ll} \uparrow & \mbox{if } a\in\{ \uparrow,\updown \} \mbox{ and } 1 \ngtr_S  2\\
                              \downarrow & \mbox{if } a=\downarrow \mbox{ and } 1 >_S 2 \\
                              \updown & \mbox{if } a\in\{ \uparrow,\updown \} \mbox{ and } 1 >_S 2\\
                              \downup & \mbox{otherwise}
              \end{array} \right.
&\mbox{if $s=1$,}\\
\left\{ \begin{array}{ll} \uparrow & \mbox{if } s-1 \nless_S s \mbox{ and } s \ngtr_S  s+1\\
                              \downarrow & \mbox{if } s-1 <_S s \mbox{ and } s >_S s+1\\
                              \updown & \mbox{if } s-1\nless_S s \mbox{ and } s >_S s-1\\
                              \downup & \mbox{otherwise}
              \end{array} \right.
&\mbox{if $2\le s\le t-1$, or}\\
\left\{ \begin{array}{ll} \uparrow & \mbox{if } t-1\nless_S t \mbox{ and } a=\uparrow\\
                              \downarrow & \mbox{if } t-1 <_S t \mbox{ and } a\in\{\downarrow,\updown\}\\
                              \updown & \mbox{if } t-1\nless_S t \mbox{ and } a\in\{\downarrow,\updown\}\\
                              \downup & \mbox{otherwise}.
              \end{array} \right.
&\mbox{if $s=t$.}
\end{array}\right.
$$
For $1\le s\le t$, we define new, extended blocks $B_s=\Core(A_s^{a_s})$.  For the remaining elements of $S$, $t+1\le s\le |S|$, $A_s$ is the trivial poset on one element, and we set $B_s=A_s$.

Now we form the extension $S[B_s: s\in S]$ by adding $t+1$ linking points $\ell_s$ ($s=0,\ldots,t$), as follows:
$\ell_0$ is related to the elements of $B_1$ in exactly the same way as $\Ext_1(A_1^{a_1})$ is related to the elements of $B_1=\Core(A_1^{a_1})$;
$\ell_s$ ($1\leq s\leq t-1$) is related to the elements of $B_s$ in the same way as $\Ext_2(A_s^{a_s})$ is related to the elements of $B_s=\Core(A_s^{a_s})$, and $\ell_s$ is related to the elements of $B_{s+1}$ in the same way as $\Ext_1(A_{s+1}^{a_{s+1}})$ is related to the elements of $B_{s+1}=\Core(A_{s+1}^{a_{s+1}})$;
$\ell_t$ is related to the elements of $B_t$ in exactly the same way as $\Ext_2(A_t^{a_t})$ is related to the elements of $B_t=\Core(A_t^{a_t})$;
these comparisons and their consequences via transitivity are the only comparisons that the elements $\ell_s$ satisfy.
Finally, we stipulate $\Ext_1(P^a)=\ell_0$, $\Ext_2(P^a)=\ell_t$.

To see that one such $P^a$ ($a\in\{\uparrow,\downarrow,\updown,\downup\}$) is simple, a case analysis needs to be carried out, but as this is similar to the permutation case, we omit the details.  The only intervals that can arise in any such $P^a$ must contain $\ell_t$ and be contained in $B_t\cup\{\ell_t\}$, and these intervals are permitted by the inductive hypothesis for all but one of the extensions. To see that there is indeed one extension with no such interval, however, note that by the simplicity of $S$, $B_t$ must be related to some other block $B_s$ of $P$, and by symmetry suppose $B_s<B_t$. Taking $\ell_t$ to be a new minimum (i.e.\ the extension $P^a$ that we are considering satisfies $a\in\{\downarrow,\updown\}$), we note that any interval $I$ in $B_t\cup\{\ell_t\}$ cannot be an interval in $P^a$, since there is some $x\in I\cap B_t$ with $x>B_s$, while $\ell_t\ngtr B_s$.

In the degenerate case we have $P=S[A_1,A_2]$, where $S$ is the 2-element chain (so $A_1<A_2$) or the 2-element
antichain (so that $A_1$ and $A_2$ are incomparable). Supposing first that $S$ is a chain, if possible pick $A_1$ so that it has no unique maximum. Assuming this is possible and that the resulting $A_2$ is not a singleton, identify the distinguished points of the extensions $A_1^{a_1}$ and $A_2^{a_2}$ in the same way as the non-degenerate case, with $\Ext_1(P^a)=\Ext_1(A_1^{a_1})$ and $\Ext_2(P^a)=\Ext_2(A_2^{a_2})$. On the other hand, if the resulting $A_2$ is a singleton, we form $S[\Core(A_1^a),A_2]$ and then add the distinguished points to $A_1^a$ to serve as the distinguished points of $P^a$. (Note that by the requirement that $A_1^\downup$ contained a maximal element that is not contained in any proper interval and that $\Ext_2(P^\downup)$ was contained in every proper interval, there are no proper intervals in $P^\downup$.)

If we cannot pick $A_1$ to have no unique maximum but we can instead pick $A_2$ to have no unique minimum, then we may argue by symmetry. This leaves only the case where $P$ is a chain, which is handled by appealing directly to the permutation case; the chain $P$ corresponds to an increasing permutation $\pi=$ under the following mapping:  $i< j$ in $P$ if and only if both $i<j$ and $\pi(i)<\pi(j)$. Under this mapping the simple extensions of increasing permutations map to simple extensions of linear orders, and by appealing to the inductive construction of Theorem~\ref{thm-permutations} this allows us to form $P^\downarrow$ and $P^\downup$, one of which is simple. We form $P^\uparrow$ and $P^\updown$ by using an analogous mapping that connects chains to decreasing permutations.

This leaves the case where $S$ is a 2-element antichain. By analogy with the case where $S$ is a two-element chain, first attempt to pick $A_1$ so that it is not itself decomposable by an antichain and, if possible, so that $A_1$ is not a singleton. If this can be done leaving $A_2$ with $|A_2|\geq 2$, then we proceed by using the same construction as the nondegenerate case, while if $A_2$ is a singleton then we form $P^\downup$ and $P^\updown$ simply by taking the corresponding extensions of $A_1$ and leaving the single element of $A_2$ unrelated to any other elements. We ensure that $P^\uparrow$ and $P^\downarrow$ are simple extensions of $P$ by adding the relation $\Ext_2(A_1)>A_2$ in $P^\uparrow$ and $\Ext_2(A_1)<A_2$ in $P^\downarrow$. This leaves only the case where $P$ is an antichain, which is covered by appealing to the connection between posets and permutations.\end{proof}

As we may expect by its similarity to the permutation case, precisely $\lceil(n+1)/2\rceil$ additional elements are required when $(P,<)$ is a linear order.  Note further that this bound is tight, because of its connection with permutations.

On the other hand, when $(P,<)$ is an antichain, a simple extension does not require this many points by the following connection with graphs: the \emph{comparability graph} $G(P,<)$ of a poset $(P,<)$ is a graph with vertex set $P$, and edges $p\sim q$ if and only if either $p<q$ or $q<p$.%
\footnote{The converse operation --- forming a poset from a graph --- is called a \emph{transitive orientation} of (the edges of) $G$. This connection between posets and graphs arises in a number of combinatorial problems --- see M\"ohring~\cite{mohring:algorithmic-asp:} for a survey.} %
It is then easy to appeal to Theorem~\ref{thm-graphs} to create simple extensions of antichains that require just $\lceil\log_2 (n+1)\rceil$ additional points, and again this bound is tight. In fact, if the simple extension of a graph admits a transitive orientation then this extension could be interpreted as a simple extension of a poset.


\section{Digraphs and Oriented Graphs}\label{sec-digraphs}
An arbitrary digraph corresponds to the most general type of binary relational structure, i.e.\ the structure $\A$ contains just one binary relation $R$ for which there are no restrictions. Elements of the relation may be viewed as directed edges on a graph, so that two vertices $u$ and $v$ of a digraph may be related in one of four different ways: $u\rightarrow v$, $u\leftarrow v$, $u\leftrightarrow v$, or $u$ and $v$ have no edge between them.

We obtain a general bound on the number of additional vertices required by analogy with the graph case: where we had two possible ``types'' of connection (i.e. an edge or no edge) between an old and a new vertex for graphs, we now have four. The result of this is that a new vertex in a digraph can do the work of two new vertices in a graph, so half as many vertices are required.

\begin{theorem}
A digraph $D$ has a simple extension with at most $m=\lceil\log_4(|V(D)|+1)\rceil$ additional vertices.
\end{theorem}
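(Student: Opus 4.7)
The plan is to adapt the proof of Theorem~\ref{thm-graphs} to digraphs, exploiting the fact that between any old vertex and any new vertex we now have four possible relation-types ($\to$, $\leftarrow$, $\leftrightarrow$, or no edge) rather than two. Consequently, $m$ new vertices furnish $4^m$ possible connection-patterns for each old vertex, which accounts for the base-$4$ logarithm. I would prove by induction on $n=|V(D)|$ the slightly stronger statement that for $n\geq 2$ there exists a simple extension $D\cup B$ with $|B|\leq\lceil\log_4(n+1)\rceil$ and in which $B$ is itself edgeless (this parallels the independence assumption on $B$ made in the proof of Theorem~\ref{thm-graphs}).

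The base case $n=2$ is handled by a small picture analogous to Figure~\ref{fig-two-point-graph}. For $n\geq 3$, write $D=H[J_v:v\in V(H)]$ via the substitution decomposition. In the non-degenerate case $|V(H)|>2$: if every $J_v$ is a singleton then $D=H$ is simple and only two vertices need be added (by counting connection patterns); otherwise I apply induction to each nontrivial $J_v$ to obtain simple extensions $J_v\cup B_v$, choose $x$ with $B=B_x$ largest, identify each $B_v$ with a subset of $B$, and for each singleton block $J_v=\{u\}$ assign $u$ a label-pattern in $\{\to,\leftarrow,\leftrightarrow,\cdot\}^{|B|}$ that neither duplicates a pattern realised by a vertex of $J_x$ nor is one of the handful of ``universal'' patterns that would create an interval. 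The count $4^{|B|}\geq |J_x|+1$ guarantees enough room. The verification that $D\cup B$ is simple then follows Cases 1--4 of Theorem~\ref{thm-graphs} almost verbatim: any nontrivial interval either spans two blocks of $H$ (so the simplicity of $H$ forces all of $D$ to be contained, whence all of $B$ by induction on $J_x$), or lives inside a single block $J_v\cup B_v$ (reduced to the spanning case by the pattern-choice above), or lies inside $J_x\cup B$ (immediately simple by induction).

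In the degenerate case $|V(H)|=2$, there are now four possible 2-element quotients (edgeless, $\to$-directed, $\leftarrow$-directed, and $\leftrightarrow$-complete), each producing an iterable decomposition. For each such $H$, I would pick $D=H[J_1,J_2]$ so that $J_1$ is as large as possible subject to $J_2$ being indecomposable by the same $H$, then extend each $J_i$ inductively and amalgamate, following the graph template. The tight extremal case arises when $D$ is \emph{monochromatic} --- every pair of distinct vertices is related in exactly the same of the four ways; this is the digraph analogue of Lemma~\ref{completegraph}, and one verifies directly that $\lceil\log_4(n+1)\rceil$ vertices suffice by assigning each vertex of $D$ a distinct vector in $\{\to,\leftarrow,\leftrightarrow,\cdot\}^m$, avoiding the single forbidden ``universal'' pattern.

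The main obstacle I anticipate is the degenerate bookkeeping: where graphs had two monochromatic flavours ($K_n$ and $\overline{K_n}$), digraphs have four, so the number of subcases roughly doubles, and in particular the ``singleton block'' subcases --- where adding a fresh vertex to $B$ might push $|B|$ from $\lceil\log_4 n\rceil$ up to $\lceil\log_4(n+1)\rceil$ --- need to be checked at each of the four quotient-types. Once one settles on a convention for the ``reserved'' label pattern (the digraph analogue of the empty/full neighbourhood in the graph proof), the rest of the argument is a mechanical translation of the graph proof with binary strings replaced by strings over a $4$-letter alphabet, and the counting bound $4^m\geq n+1$ replacing $2^m\geq n+1$.
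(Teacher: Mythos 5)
Your route --- a direct induction on the substitution decomposition of $D$, replaying the proof of Theorem~\ref{thm-graphs} with the two-letter alphabet of graph adjacencies replaced by the four-letter alphabet $\{\to,\leftarrow,\leftrightarrow,\cdot\}$ --- is viable, but it is genuinely different from the paper's. The paper instead uses Theorem~\ref{thm-graphs} as a black box: form the underlying undirected graph $G_D$, take a simple extension $G_D^*$ with new vertices $b_1,\dots,b_m$ where $m\le\lceil\log_2(n+1)\rceil$, and then fold each pair $b_{2i-1},b_{2i}$ into a single new digraph vertex $c_i$ whose out-neighbourhood copies the $G_D^*$-neighbourhood of $b_{2i-1}$ and whose in-neighbourhood copies that of $b_{2i}$. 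Since every interval of $D$ is an interval of $G_D$, and being distinguished by $b_{2i-1}$ or by $b_{2i}$ translates into being distinguished by $c_i$, simplicity of $G_D^*$ transfers to the digraph extension, which has only $\lceil m/2\rceil\le\lceil\log_4(n+1)\rceil$ new vertices. This buys a half-page proof with no fresh case analysis; your route costs a complete re-run of the graph induction with the degenerate bookkeeping doubled (four monochromatic quotient types instead of two), though it is self-contained and would more readily yield the finer ``how many vertices does this particular $D$ actually need'' information discussed in Section~\ref{sec-conclusion}.

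One step of your plan is wrong as written, though not fatally so. The inequality $4^{|B|}\ge|J_x|+1$ does not follow from the induction: the inductive hypothesis gives only the upper bound $|B_x|\le\lceil\log_4(|J_x|+1)\rceil$, and the construction may return far fewer new vertices than that, so $B$ need not have ``enough room'' for distinct patterns. Fortunately, in the non-degenerate case you do not need it: as in the paper's graph proof, a singleton block $J_v$ need not receive a pattern distinct from those of the vertices of $J_x$ (the simplicity of the quotient $H$ already separates distinct blocks); it only needs to be attached to some new vertex that relates non-uniformly to $J_x$, and such a vertex exists because $J_x\cup B$ is simple. The counting bound $4^m\ge n+1$ is genuinely needed only in the monochromatic degenerate cases, where the full complement of $\lceil\log_4(n+1)\rceil$ new vertices is added by construction rather than supplied by induction.
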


\begin{proof}
The \emph{underlying graph} $G_D$ of a digraph $D$ is the graph formed by replacing each directed edge with an undirected one, i.e.\ $uv\in E(G)$ if and only if $u\rightarrow v$ or $v\rightarrow u$. Moreover, it is easy to see that a set of vertices that forms an interval in $D$ will also form an interval in $G_D$. By Theorem~\ref{thm-graphs}, we can construct a simple extension $G_D^*$ of $G_D$ by adding a set $B=\{b_1,b_2,\ldots,b_m\}$ of $m\leq\lceil\log_2(|V(G_D)|+1)\rceil$ vertices. Now, noting that $2\lceil\log_4(|V(D)|+1)\rceil \geq \lceil\log_2(|V(G_D)|)+1\rceil$, we add a set $C=\{c_1,\ldots,c_\ell\}$ of $\ell= \lceil m/2\rceil$ additional vertices to $D_G$, satisfying, for every $v \in V(D)$ and $i=1,2,\ldots, \ell$,
\begin{eqnarray*}
v\rightarrow c_i&\textrm{if and only if}& vb_{2i-1}\in E(G_D^*)\textrm{, and,}\\
c_i\rightarrow v&\textrm{if and only if}& vb_{2i}\in E(G_D^*)\textrm{ and }2i\leq \ell.\end{eqnarray*}
Thus every vertex $c_i$ of $C$ has outneighbourhood matching the neighbourhood of $b_{2i-1}$, and inneighbourhood matching $b_{2i}$. It is now straightforward to check that this is a simple extension.
\end{proof}

This bound can be seen to be tight for a complete digraph --- a digraph for which $u\leftrightarrow v$ for every pair of distinct vertices $u$ and $v$ --- and an empty digraph, again by analogy with the graph case.

However, if our digraph forms a linear order (and is hence transitive), our structure forms a transitive tournament. Appealing to Theorem~\ref{tournament}, only one additional point is required to form a simple tournament unless the tournament had an odd number of vertices, when two are required. Here, however, we do not require the resulting simple extension to be a tournament, and this added freedom allows us to form a simple one-point extension in every case. See Figure~\ref{fig-linear-digraph} for an example.

Extending the connection with tournaments, any digraph that is also a tournament must consequently have a simple one-vertex extension. A natural question then arises: which digraphs have simple extensions requiring only one additional vertex? Clearly structures which are ``close'' to tournaments will have this property, and again the answer must lie in the substitution decomposition tree, and in particular how we decompose a structure with a quotient of size $2$.

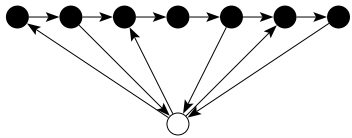
\begin{figure}
\begin{center}
\psset{xunit=0.014in, yunit=0.014in}
\psset{linewidth=0.005in}
\begin{pspicture}(0,0)(120,60)
\Cnode*[fillstyle=solid,radius=0.06in](0,50){p1}
\Cnode*[fillstyle=solid,radius=0.06in](20,50){p2}
\Cnode*[fillstyle=solid,radius=0.06in](40,50){p3}
\Cnode*[fillstyle=solid,radius=0.06in](60,50){p4}
\Cnode*[fillstyle=solid,radius=0.06in](80,50){p5}
\Cnode*[fillstyle=solid,radius=0.06in](100,50){p6}
\Cnode*[fillstyle=solid,radius=0.06in](120,50){p7}
\Cnode[radius=0.06in](60,10){x}
\psset{arrowsize=3pt 2}
\psset{arrowinset=0.2}
\ncline{->}{p1}{p2}
\ncline{->}{p2}{p3}
\ncline{->}{p3}{p4}
\ncline{->}{p4}{p5}
\ncline{->}{p5}{p6}
\ncline{->}{p6}{p7}
\ncline{<-}{p1}{x}
\ncline{->}{p2}{x}
\ncline{<-}{p3}{x}
\ncline{->}{p5}{x}
\ncline{<-}{p6}{x}
\ncline{->}{p7}{x}
\end{pspicture}
\end{center}
\caption{A one-vertex simple extension of a linear digraph on $7$ vertices.}\label{fig-linear-digraph}
\end{figure}

\paragraph{Oriented Graphs}
An {\it oriented graph} $G$ is a graph in which each edge has a specified direction, and it can be described as a relational structure $\mathcal{A}_G$ on the language $\mathcal{L}=\{\rightarrow, n_\rightarrow=2\}$ where $\rightarrow$ is an asymmetric binary relation, i.e.\ for each $u,v\in\dom(\mathcal{A}_G)=V(G)$, $u\rightarrow v$ implies $v\not\rightarrow u$. The difference between these and digraphs is that oriented graphs permit only three types of connection between two vertices, and so in particular there is no oriented graph corresponding to a complete graph. There is, however, an oriented graph corresponding to the empty graph, and it is this that gives rise to the upper bound.

\begin{theorem}
An oriented graph $G$ has a simple extension with at most $m=\lceil\log_3(|V(G)|+1)\rceil$ additional vertices.
\end{theorem}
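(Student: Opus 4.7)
The plan is to mimic the digraph case, exploiting the fact that in an oriented graph each new vertex $c$ can stand in exactly one of three possible relationships to each $v\in V(G)$: either $v\rightarrow c$, $c\rightarrow v$, or no edge. Thus a set $C=\{c_1,\dots,c_\ell\}$ of $\ell$ new vertices furnishes each $v\in V(G)$ with a \emph{ternary signature} $\sigma_v\in\{1,2,3\}^\ell$, and taking $\ell=\lceil\log_3(|V(G)|+1)\rceil$ ensures $3^\ell\geq |V(G)|+1$, providing enough distinct signatures (with one left over, say the all-$3$ signature corresponding to no edge to any $c_i$).

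The construction I would use is the following. Assign pairwise distinct signatures $\sigma_v\in\{1,2,3\}^\ell$ to the vertices of $V(G)$, avoiding the all-$3$ signature, and declare $v\rightarrow c_i$ precisely when $\sigma_v(i)=1$ and $c_i\rightarrow v$ precisely when $\sigma_v(i)=2$; the edges among members of $C$ can be specified freely, for instance by taking $C$ to be edgeless as an oriented graph. Because the signatures are distinct, for every pair $i\neq j$ some $v\in V(G)$ satisfies $\sigma_v(i)\neq\sigma_v(j)$, so that $v$ separates $c_i$ from $c_j$ in $G\cup C$.

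Verification of simplicity would then proceed by the now-familiar case analysis: a would-be proper interval $I\subseteq V(G)\cup C$ of size at least two lies either entirely within $V(G)$ (ruled out because distinct signatures produce some $c_i\in C\setminus I$ distinguishing two members of $I$), entirely within $C$ (ruled out by the previous observation, as some $v\in V(G)\setminus I$ separates any two members of $I$), or else mixes both kinds of vertex, in which case one picks an external element of $V(G)$ or $C$ to reduce to the previous cases.

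The main obstacle will be the degenerate substitution-decomposition cases --- most notably the empty oriented graph, which is the extremal example that makes the $\lceil\log_3(|V(G)|+1)\rceil$ bound tight, by analogy with the role of complete and empty graphs in Theorem~\ref{thm-graphs} --- together with the delicate mixed-interval subcase, which requires the signatures and the intra-$C$ orientation to be selected with enough flexibility that a genuine external distinguishing vertex always exists no matter how $I$ straddles $V(G)$ and $C$.
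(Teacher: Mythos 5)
First, note that the paper itself states this theorem without proof (it is offered by analogy with the digraph case), so there is no official argument to compare against; and the digraph-style reduction to Theorem~\ref{thm-graphs} does not transfer cleanly here, since three states per new vertex cannot losslessly encode pairs of binary vertices. Your instinct to argue directly via ternary signatures is therefore the right one, and the counting that produces $\lceil\log_3(|V(G)|+1)\rceil$ is correct. It is also true that assigning pairwise distinct, non-all-$3$ signatures breaks every proper interval contained in $V(G)$, since a distinguishing $c_i$ always lies outside $V(G)$.

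However, there are concrete gaps. The assertion that ``because the signatures are distinct, for every pair $i\neq j$ some $v$ satisfies $\sigma_v(i)\neq\sigma_v(j)$'' is false: distinctness of the rows of the signature matrix does not imply distinctness (or even non-identity) of its columns --- e.g.\ signatures $(1,1),(2,2),(3,1)$ are pairwise distinct while columns may still coincide for some choices --- so intervals inside $C$ are not automatically broken and you must either choose the signature assignment with distinct, separating columns or use the intra-$C$ orientation to do this work. More seriously, the mixed case is not merely ``delicate'': it is the actual crux, and a naive distinct-signature assignment can genuinely fail there. For instance, if some $v\in V(G)$ receives a signature with exactly one non-$3$ entry, say at $c_i$, and every other vertex of $G$ happens to relate to $c_i$ in a single fixed way while $v$ relates to all of $V(G)\setminus\{v\}$ and to $C\setminus\{c_i\}$ exactly as $c_i$ does, then $\{v,c_i\}$ is a proper interval; the extremal count $n=3^\ell-1$ leaves no slack to simply re-choose signatures. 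This is precisely why the graph case (Theorem~\ref{thm-graphs}) is not proved by a one-shot neighbourhood assignment but by an induction on the substitution decomposition, with the new vertices attached block by block so that mixed intervals are excluded by the inductive hypothesis. A complete proof here needs the same machinery: an inductive statement over the decomposition $G=H[J_v]$, explicit treatment of the degenerate quotient (in particular the empty oriented graph, which realises the bound), and a specification of the orientation within $C$. As it stands, your proposal is a correct plan for the easy half of the verification and an acknowledgement that the hard half remains.
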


When the oriented graph forms a tournament, it is easy to check that only one additional vertex is required --- note in particular that the simple extension in Figure~\ref{fig-linear-digraph} is also an oriented graph.


\section{Relations with Higher Arity}\label{sec-higher-arity}

When a relational structure is defined on a language containing a $k$-ary relation for some $k\geq 3$, simple extensions are often trivial to find and depend only on the type of relation. We begin with a straightforward observation.

\begin{lemma}\label{lem-one-relation-will-do}
A relational structure $\A$ on the language $\L=\{ R_1,R_2,\ldots, n_{R_1}, n_{R_2},\ldots \}$ is simple if there is some $k$ for which the structure $\A_{R_k}$, defined on the ground set $\dom(\A_{R_k})=\dom(\A)$ and language $\L_{R_k}=\{R_k,n_{R_k}\}$, is simple. \end{lemma}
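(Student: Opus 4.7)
The proof is essentially a definitional unfolding: I will argue that every interval of $\A$ is automatically an interval of $\A_{R_k}$, and then invoke simplicity of $\A_{R_k}$ by contrapositive.

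The plan is to suppose, for contradiction, that $\A$ has a proper interval $I \subseteq \dom(\A)$. By the definition of an interval in $\A$, the interchangeability condition
\[
(x_1,\ldots,x_{i-1},x,x_{i+1},\ldots,x_{n_R})\in R^\A \;\Longleftrightarrow\; (x_1,\ldots,x_{i-1},y,x_{i+1},\ldots,x_{n_R})\in R^\A
\]
holds for every $R \in \L$, all $x,y \in I$, and all $(n_R-1)$-tuples outside $I^{n_R-1}$. In particular, the condition holds for the single relation $R = R_k$. Since $\A$ and $\A_{R_k}$ share the same ground set and the interpretation of $R_k$ is unchanged when we pass to the reduct $\A_{R_k}$, this is exactly the statement that $I$ is an interval of $\A_{R_k}$. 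Moreover, ``proper'' is a condition purely on the subset $I \subseteq \dom(\A) = \dom(\A_{R_k})$ (namely $I \neq \emptyset$, $I$ is not a singleton, and $I \neq \dom(\A)$), so it is preserved by passing to the reduct.

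Hence $I$ is a proper interval of $\A_{R_k}$, contradicting the hypothesis that $\A_{R_k}$ is simple. Therefore $\A$ has no proper intervals, i.e.\ $\A$ is simple.

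There is no real obstacle here — the statement is a one-line observation that enriching the language with further relations can only add constraints to the ``interval'' condition, and can never turn a non-interval into an interval. The only minor care needed is to note that the notion of ``proper'' is a property of the subset and the ground set alone, so it transfers verbatim between $\A$ and its reduct $\A_{R_k}$.
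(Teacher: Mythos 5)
Your proof is correct and is essentially the paper's own argument: both observe that a proper interval of $\A$ must satisfy the interchangeability condition for $R_k$ in particular, hence would be a proper interval of the reduct $\A_{R_k}$, contradicting its simplicity. The paper merely phrases the same contrapositive via an explicit witness tuple for $R_k$, so there is nothing to add.
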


\begin{proof}
Suppose, for a contradiction, that $I$ is a nontrivial interval of $\A$, and in particular we may choose two distinct elements $x,y\in I$. Since $\A_{R_k}$ is simple, there exists $z\in \dom(\A_{R_k})\setminus I=\dom(\A)\setminus I$ for which $x$ and $y$ are not interchangeable in any relationship of $R_k$ containing $z$ and at least one of $x$ and $y$. This, however, is also true in $\A$.\end{proof}

Subsequently, if a relational structure $\mathcal{A}$ contains an arbitrary $k$-ary relation $R$ with $k\geq 3$, only one additional element is required, regardless of what other relations the structure may hold:

\begin{theorem}\label{thm-arbitrary}
A relational structure $\A$ containing an arbitrary $n_R$-ary relation $R$ with $n_R\geq 3$ has a one-element simple extension.
\end{theorem}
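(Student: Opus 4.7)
The plan is to invoke Lemma~\ref{lem-one-relation-will-do}, which reduces the task to constructing a one-element extension in which the single relation $R$ alone becomes simple; any other relations in the language can be extended arbitrarily to tuples involving the new element, since they play no role in the simplicity criterion supplied by the lemma. So I would concentrate entirely on extending $R$.

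Writing $k = n_R \geq 3$ and $A = \dom(\A)$, I would add a new element $z\notin A$ and define
\[
R' \;=\; R \;\cup\; \{(a, z, z, \ldots, z) : a \in A\} \;\cup\; \{(z, a, a, \ldots, a) : a \in A\}.
\]
Since every added tuple contains $z$, we have $R'\cap A^k = R$, so this is genuinely an extension. The two ``marker shapes'' $(a,z,\ldots,z)$ and $(z,a,\ldots,a)$ are distinct from each other and from every tuple of $R\subseteq A^k$, which is what makes the construction go through; this is precisely where the hypothesis $n_R\geq 3$ enters, as for arity~$2$ the two shapes would collapse into $(a,z)$ and $(z,a)$ and the construction would fail.

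To verify simplicity of $\A'_R$, I would take a proper $I\subsetneq A\cup\{z\}$ with $|I|\geq 2$ and show $I$ is not an interval by splitting on whether $z\in I$. If $z\notin I$, pick distinct $x,y\in I$ and insert in position $2$ with the $(k-1)$-tuple $(z,y,y,\ldots,y)$: this tuple is outside $I^{k-1}$ because $z\notin I$, and inserting $y$ produces an element of $R'$ of the second shape while inserting $x$ does not (it matches none of the three defining shapes, using $x\neq y$). If instead $z\in I$, pick $b\in A\setminus I$ (nonempty since $I$ is proper) and $y\in I\cap A$ (nonempty since $|I|\geq 2$ and $z$ is the sole non-$A$ element), and insert at position $2$ with $(b,z,z,\ldots,z)$: this tuple is outside $I^{k-1}$ because $b\notin I$, inserting $z$ yields a tuple of the first shape in $R'$, while inserting $y$ yields something matching none of the shapes.

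The only real obstacle is designing $R'$ so that a single uniform rule simultaneously defeats every proper subset; once the two ``marker shapes'' above are chosen, the case analysis is short and mechanical, and the proof follows from combining the verification with Lemma~\ref{lem-one-relation-will-do}.
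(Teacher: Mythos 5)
Your proof is correct, and it reaches the result by a genuinely different construction from the paper's. The paper likewise reduces to a single relation via Lemma~\ref{lem-one-relation-will-do} (and further to arity $3$, padding higher arities with extra copies of the new element), but it then fixes an auxiliary \emph{simple binary} structure $\B$ on $\dom(\A)$ --- a path, or a simple tournament when $|\dom(\A)|\le 3$ --- and puts $(x,u,v)$ into the relation exactly when $(u,v)\in B^{\B}$; simplicity of $\B$ forces any interval containing two elements of $\dom(\A)$ to contain all of $\dom(\A)$, and the remaining candidates $\{w,x\}$ and $\dom(\A)$ are excluded using the absence of tuples of the form $(x,x,\cdot)$ and the fact that $\B$ is not complete. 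Your two marker shapes $(a,z,\dots,z)$ and $(z,a,\dots,a)$ dispense with the auxiliary structure altogether: a single uniform rule separates every pair, the argument works for all arities $k\ge 3$ without reduction to $k=3$, and you avoid the paper's small-cardinality case split needed to guarantee that a simple binary structure exists. One cosmetic imprecision: for $k=2$ the two shapes do not ``collapse into'' one another; rather, every tuple joining $z$ to an element of $A$ is already a marker, so the markers cannot distinguish elements of $A$ and $A$ itself becomes an interval. Your verification uses $k\ge3$ in exactly the right places --- $(z,x,y,\dots,y)$ and $(b,y,z,\dots,z)$ each fail both shapes precisely because there are at least two coordinates beyond the first --- so the proof stands.
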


\begin{proof}By Lemma~\ref{lem-one-relation-will-do} we need only prove that we may form a one-element simple extension for a structure $\A$ defined on the language $\L=\{R,n_R\}$ consisting of only one relation. Furthermore, we assume that $n_R=3$, noting that the result is easily extended if our chosen arbitrary relation has higher arity.

Choose any simple binary relational structure $\B$ on the set $\dom(\A)$ with language $\{B, n_B=2\}$ --- such a structure exists for every size of set by taking, for example, a path in the graph-theoretic sense unless $|\A|\leq 3$, in which case we may choose a simple tournament. We now add an additional vertex $x$ to $\A$ and define relations between $x$ and $\dom(\A)$ to form $\A^*$ as follows: for every $(u,v)\in B^{\B}$, put $(x,u,v)\in R^{\A^*}$. (Note that if $n_R>3$ we set $(x,\ldots,x,u,v)\in R^{\A^*}$.) We claim that $\A^*$ is simple. By the simplicity of $\B$, any interval containing more than one element of $\dom(\A)$ must contain all of $\dom(\A)$, so the only intervals we need to rule out are $I=\{w,x\}$ for some $w\in\dom(\A)$, and $I=\dom(\A)$.

For the first case, by the construction there exists at least one relationship of the form $(x,w,v)$ or $(x,v,w)$ in $R^{\A^*}$ for some $v\in\dom(\A)$ with $v\neq w$, and without loss we may assume the former. Since $I$ is an interval, this implies that $(x,x,v)\in R^{\A^*}$, a contradiction as there are no relationships of the form $(x,x,\cdot)$ in $R^{\A^*}$. In the second case, $I=\dom(\A)$ implies that $(x,u,v)\in R^{\A^*}$ for every $u,v\in\dom{\A}$, which implies that $\B$ can be thought of as a complete graph, a contradiction since $\B$ was chosen explicitly to be simple.
\end{proof}

Structures which have more restrictive relations do also sometimes have one point extensions. For example, a suitably large relational structure containing a $k$-ary relation for some $k\geq 3$ that is irreflexive in its entries is easily found to have such an extension:

\begin{theorem}For $k\geq 3$, a relational structure with $n\geq k$ elements on a $k$-ary irreflexive relation has a one vertex simple extension.\end{theorem}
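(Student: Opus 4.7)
By Lemma~\ref{lem-one-relation-will-do} it suffices to construct a simple extension of the reduct of $\A$ to the single relation $R$, so I may assume $\L=\{R,n_R=k\}$. My plan is to adapt the construction of Theorem~\ref{thm-arbitrary}; the principal obstacle is irreflexivity, which forbids padding a fresh tuple with repeated copies of the new element $x$. To overcome this I first fix a simple binary relational structure $\B$ on $\dom(\A)$. Such a $\B$ exists for every $n\ge 3$: for instance, the directed path $a_1\to a_2\to\cdots\to a_n$ is routinely verified to be simple.

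I then form $\A^*$ by adjoining a new element $x$ and defining $R^{\A^*}$ to agree with $R^{\A}$ on tuples in $\dom(\A)^k$ and, in addition, to contain precisely those $k$-tuples $(x,u_2,u_3,u_4,\ldots,u_k)$ of distinct elements (with $u_2,\ldots,u_k\in\dom(\A)$) for which $(u_2,u_3)\in B^{\B}$. No other tuples involving $x$ are added. Since $n\ge k$, there are always $k-3$ distinct padding elements $u_4,\ldots,u_k$ available, so the definition is non-vacuous, and irreflexivity is respected automatically.

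To verify simplicity, let $I$ be a putative proper interval of $\A^*$. If $I\subseteq\dom(\A)$ with $|I|\ge 2$, simplicity of $\B$ produces $u_1,u_2\in I$ and $z\in\dom(\A)$---with $z\notin I$ unless $I=\dom(\A)$---such that $(u_1,z)$ and $(u_2,z)$ differ in $B^{\B}$ (or with coordinates reversed). Padding by arbitrary distinct $w_4,\ldots,w_k\in\dom(\A)\setminus\{u_1,u_2,z\}$, the tuples $(x,u_1,z,w_4,\ldots,w_k)$ and $(x,u_2,z,w_4,\ldots,w_k)$ differ in $R^{\A^*}$ while matching outside position $2$; since $x\notin I$, this contradicts $I$ being an interval. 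Suppose instead $x\in I$ and $I'=I\setminus\{x\}$ is a non-empty proper subset of $\dom(\A)$. If some $w\in I'$ has an out-edge $(w,z)\in B^{\B}$ with $z\notin I'$, then $(x,w,z,w_4,\ldots,w_k)\in R^{\A^*}$ for any valid padding, whereas replacing the second coordinate $w\in I$ by $x\in I$ produces $(x,x,z,\ldots)$, excluded by irreflexivity; since $z\notin I$, the interval condition fails. The symmetric case of an in-edge is handled via the third coordinate. Hence no $\B$-edge crosses between $I'$ and $\dom(\A)\setminus I'$, making $I'$ itself an interval of $\B$. Simplicity of $\B$ then leaves only $|I'|=1$---impossible since $\B$ has no isolated vertex when $n\ge 3$---or $I'=\dom(\A)$, which forces $I=\dom(\A^*)$ and is not proper.

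The chief difficulty is precisely irreflexivity: it blocks the $(x,\ldots,x,u,v)$ padding trick of Theorem~\ref{thm-arbitrary}, but reciprocally it converts any tuple of the form $(x,x,\cdot,\ldots)$ into an automatic non-member of $R^{\A^*}$, and this is exactly the mechanism that separates $x$ from every element of $\dom(\A)$ in the second case of the analysis.
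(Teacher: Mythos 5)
Your proof is correct, but your construction is more elaborate than the one in the paper, and the comparison is instructive. The paper dispenses with the auxiliary simple binary structure $\B$ entirely: it simply declares $(x_1,\ldots,x_k)\in R^{\A^*}$ whenever the tuple contains $x$ in exactly one position and its remaining $k-1$ entries are distinct elements of $\dom(\A)$. Irreflexivity then does all the work in one stroke --- given any proper interval $I$ and suitable $u,v\in I$, one exhibits a tuple of $R^{\A^*}$ containing $u$, $v$, and a coordinate outside $I$, and the swap forces a tuple with a repeated entry into $R^{\A^*}$, which is impossible. Your version gates the new tuples by the edges of a directed path $\B$ and puts $x$ only in the first coordinate, which forces you into the two-case analysis ($x\notin I$ versus $x\in I$, the latter reducing to ``$I\setminus\{x\}$ is an interval of $\B$ with no crossing edges''); this is all sound (including the observation that a directed path on $n\ge 3$ vertices is simple and has no isolated vertex), but it is machinery imported from Theorem~\ref{thm-arbitrary} that the irreflexive setting does not actually require. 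One substantive cost of your asymmetric construction: the paper's remark immediately following the theorem deduces the hypergraph case from the fact that its $R^{\A^*}$ is symmetric in its entries whenever $R^{\A}$ is. Your $R^{\A^*}$ is not symmetric (it privileges position $1$ for $x$ and orients the pair $(u_2,u_3)$ by $\B$), so that corollary would not follow from your construction without modification.
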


\begin{proof}
Fix $k\geq 3$, and let $R$ be a $k$-ary irreflexive relation. By Lemma~\ref{lem-one-relation-will-do} it suffices to form a one-element simple extension $\A^*$ of a relational structure $\A$ on the language $\L=\{R,k\}$ whose ground set consists of $n\geq k$ vertices. Set $\dom{\A^*}=\dom{\A}\cup\{x\}$, and define the set of relations $R^{\A^*}$ as follows: $(x_1,\ldots,x_k)\in R^{\A^*}$ if and only if either $(x_1,\ldots,x_k)\in R^\A$ or $x=x_i$ for exactly one $i$ and $(x_1,\ldots,x_{i-1},x_{i+1},\ldots,x_k)$ is a $(k-1)$-tuple of distinct elements from $\dom(\A)$.

It follows routinely that $\A^*$ is simple: suppose that $I$ is a proper interval of $\A^*$ containing $u,v\in \dom(\A^*)$. By the construction there is at least one relation in $R^{\A^*}$ of the form $(u,v,x_3,\ldots,x_k)$ where $x_3\not\in I$, but this then means that $(u,u,x_3,\ldots,x_k)\in R^{\A^*}$, a contradiction since $R$ is irreflexive.
\end{proof}

Note in particular that the above proof immediately answers the problem for hypergraphs since the construction ensures that if $\A$ is a hypergraph then so is $\A^*$ (i.e.\ $R^{\A^*}$ is symmetric in its entries).

\section{Concluding Remarks}\label{sec-conclusion}

\paragraph{Tight bounds.} Although the bounds in this paper are tight in the sense that there are certain structures which require as many points as the bound permits, on a typical example our constructions do quite a bit better. As noted in Section~\ref{sec-graphs}, more than two vertices are required in the graph case only if there is a large degenerate node in the substitution decomposition tree, all of whose children are leaves. Similar analysis of the substitution decomposition tree may be used in the cases of permutations, oriented graphs and digraphs to determine exactly how many points will be required in any particular case. More complicated is the poset case: we cannot simply appeal to the permutation case because degenerate decompositions whose quotients are antichains can be treated as if they are empty graphs, which require only $O(\log_2 n)$ additional vertices.

\paragraph{Average number of additional points required.} In the case of graphs and any relational structure defined on a single asymmetric relation (e.g. tournaments, posets and oriented graphs), almost all structures are already simple (see M\"ohring~\cite{mohring:on-the-distribu:}) and hence no points are required. The permutation case is more interesting, because here one expects to find two intervals of size two (see, for example,~\cite{brignall:a-survey-of-sim:}).

\paragraph{Simple extensions of infinite structures.} Proposition~\ref{tournament-one-point} as proved by Erd{\H{o}}s, Hajnal and Milner applies to tournaments of arbitrary cardinality, and it is natural to wonder what (if anything) can be said for other structures. Note that for many such structures it is easy to construct examples that require infinitely many additional points (e.g., the countably infinite complete graph). On the other hand, Theorem~\ref{thm-arbitrary} holds so long as the structure contains an arbitrary $n_R$-ary relation with $n_R\geq 3$.

\bibliographystyle{acm}
\bibliography{../refs}

\end{document}